\numberwithin{equation}{section}
\theoremstyle{plain}
\newtheorem{lemma}{Lemma}[section]
\newtheorem{theorem}[lemma]{Theorem}
\renewcommand{\mathbb}{\mathbbm}                     
\renewcommand{\epsilon}{\varepsilon}                 
\renewcommand{\phi}{\varphi}
\renewcommand{\theta}{\vartheta}
\renewcommand{\le}{\leqslant}
\renewcommand{\ge}{\geqslant}
\newcommand{\origsetminus}{} \let\origsetminus=\setminus     
\renewcommand{\setminus}{\!\origsetminus\!}
\newcommand{\origfoo}{} \let\origfoo=\sqrt           
\renewcommand{\sqrt}[1]{\origfoo{#1}\;}
\renewcommand{\O}{{\mathcal O}}                      
\newcommand{\abs}[1]{\left\lvert #1 \right\rvert}    
\newcommand{\norm}[1]{\left\lVert #1 \right\rVert}   
\DeclareMathOperator{\F}{{\mathcal F}}                   
\DeclareMathOperator{\R}{{\mathbb R}}                
\DeclareMathOperator{\Rp}{{\mathbb R}_+}             
\DeclareMathOperator{\N}{{\mathbb N}}                
\DeclareMathOperator{\Borel}{{\mathcal B}}
\newcommand{\scapro}[2]{\langle #1,#2\rangle}       
\newcommand{\scaprob}[2]{\big\langle #1,#2\big\rangle}       
\newcommand{\scaprobb}[2]{\Big\langle #1,#2\Big\rangle}
\DeclareMathOperator{\1}{\mathbbm 1}
\newcounter{zahl}
\DeclareMathOperator{\Z}{{\mathcal Z}}
\numberwithin{equation}{section}
\newcommand{\bcase}{\begin{cases}}
\newcommand{\ecase}{\end{cases}}
\newcommand{\pmat}{\begin{pmatrix}}
\newcommand{\epmat}{\end{pmatrix}}
\newcommand{\barray}{\begin{array}{rcl}}
\newcommand{\earray}{\end{array}}
\newcommand{\del}[1]{}
\renewcommand{\div}{{\;\mbox{div}\; }}
\newcommand{\be} {\begin{enumerate} }
\newcommand{\ee} {\end{enumerate} }
\newcommand{\TT}{{\rm I \kern -0.2em T}}
\newcommand{\DEQS}{\begin{eqnarray*}}
\newcommand{\EEQS}{\end{eqnarray*}}
\newcommand{\DEQSZ}{\begin{eqnarray}}
\newcommand{\EEQSZ}{\end{eqnarray}}
\begin{document}

\title{Large deviations for stochastic heat equations  with memory driven 
by L\'evy-type noise}

\author{
Markus Riedle\\
Department of Mathematics\\
King's College London\\
London WC2R 2LS\\
United Kingdom\\
markus.riedle@kcl.ac.uk
\and
Jianliang Zhai\footnote{The second author acknowledges 
funding by K. C. Wong Foundation for a 1-year fellowship at King's College London}\\
School of Mathematical Sciences\\University of Science \\and Technology of China\\
 Chinese Academy of Science\\
Hefei, 230026, China\\
zhaijl@ustc.edu.cn}

\maketitle

\begin{abstract}
For a heat equation with memory driven by a L\'evy-type noise we establish the existence of a unique solution. The main part of the article focuses on the Freidlin-Wentzell large deviation principle of the  solutions of heat equation with memory driven by a L\'evy-type noise. For this purpose, we exploit the recently introduced
weak convergence approach.
\end{abstract}

\noindent
{\rm \bf AMS 2010 subject classification:} 
 60H15, 35R60, 37L55, 60F10.\\
{\rm \bf Key words and phrases:} large deviations, heat equation with memory, 
weak convergence.

 \thispagestyle{empty}

\section{Introduction}

In this work we consider a non-linear heat equation with memory driven by a L\'evy-type noise. Heat equations with memory have been
considered for a long time and their study has recently been published in the monograph \cite{Amendola-etal}. 
In order to correct the non-physical property of instantaneous propagation for the heat equation, Gurtin and Pipkin introduced in 
\cite{GurtinPipkin}  a modified Fourier's law,  which resulted in a heat equation with memory.  More precisely, let $u(t,x)$ denote 
the temperature at time $t$ at position $x$ in a bounded domain $\bar{\mathcal{O}}$.
By following the theory  developed in \cite{Coleman-Gurtin}, \cite{GurtinPipkin} and \cite{Nunziato}, the temperature $u(t,x)$ 
and the density $e(t,x)$ of the internal energy and the heat flux $\phi(t,x)$ are related by
\begin{align*}
e(t,x)&=e_0+b_0u(t,x)\qquad \text{ for } t\in\mathbb{R}^+,\, x\in\bar{\mathcal{O}},\\
\phi(t,x)&=-c_0\nabla u(t,x)+\int_{-\infty}^t \gamma(r)\nabla u(t+r,x)\,dr\qquad \text{ for }t\in\mathbb{R}^+,\ \ x\in\bar{\mathcal{O}}.
\end{align*}
Here, the constant $e_0$ denotes the internal energy at equilibrium and the constants $b_0>0$ and $c_0>0$ are the heat capacity and thermal conduction. The heat flux relaxation is described by the function $\gamma\colon (-\infty,0]\to \Rp$. The energy
 balance for the system has the form
 $$
 \partial_t e(t,x)=-\text{div}\phi(t,x)+\widetilde{F}(t,u(t,x)),
 $$
 where $\widetilde{F}$ is the nonlinear heat supply which might describe temperature-dependent radiative phenomena. After rescaling the constants and generalizing $\widetilde{F}$, we arrive at
 \begin{align}\label{eq.det}
\partial_t u(t,x)&=\Delta u(t,x)+ F\big(t,u(t,x)\big)
 + \!\! \div B\big(t,u(t,x)\big)+
 \int_{-\infty}^0 \!\!\gamma(r)\Delta u(t+r,x)\, dr.
\end{align}
This equation models the heat flow in a rigid, isotropic, homogeneous heat conductor
with linear memory and is considered for example in \cite{GiorgiPata}, \cite{GiorgiPataMarzocchi} and  \cite{GurtinPipkin}.

It is well known that many physical phenomena are better described
by taking into account some kind of uncertainty, for instance some randomness or random environment. For this purpose, we assume in this work that the heat supply $\tilde{F}$ in the derivation above contains a stochastic term representing an environmental noise. In order to accommodate a general non-Gaussian environmental noise with possibly discontinuous trajectories, we model the noise by a L\'evy-tpe 
stochastic process. Repeating the above derivation with $\widetilde{F}$ containing 
such random noise, we arrive at a stochastically perturbed version of Equation 
\eqref{eq.det}; see Equation \eqref{eq heat} in the next section.  The main aim of our work is to establish the existence and uniqueness theorem and to investigate the Freidlin-Wentzell large deviation principle of the  solutions of the stochastically perturbed equation. 

For the case of a Gaussian environmental noise, there exists a great amount of literature. For instance, results on the well-posedness of the resulting equation  were obtained in  \cite{BarbuBonaccorsiTubaro}, \cite{Caraballo-etal-07} and \cite{LXZ}. Long time behaviors of the stochatically perturbed equation for a Gaussian environmental  noise were studied in \cite{BarbuBonaccorsiTubaro}, \cite{BonaccorsiDaPratoTubaro},  \cite{Caraballo-etal-07} and \cite{CaraballoRealChueshov}. The Freidlin-Wentzell large deviation principle in this situation had been studied in \cite{LXZ} under certain restriction on the  the nonlinearity. 

In our case of a L\'evy-type environmental noise, we will obtain the well-posedness
of the stochastically perturbed equation by the classical cutting-off method. 
Due to the appearance of jumps in our setting, the Freidlin-Wentzell large deviation principle are distinctively different to the Gaussian case in \cite{LXZ}. We will use the weak convergence approach introduced in \cite{Budhiraja-Chen-Dupuis} and \cite{Budhiraja-Dupuis-Maroulas.} for the case of Poisson random measures. This approach is a powerful tool to establish the Freidlin-Wentzell large deviation principle for various finite and infinite dimensional stochastic dynamical systems with irregular coefficients driven by a non-Gaussian 
L\'evy noise, see  for example \cite{Bao-Yuan}, \cite{Budhiraja-Chen-Dupuis}, \cite{Dong-Xiong-Zhai-Zhang}, \cite{Xiong-Zhai}, \cite{YZZ} and \cite{Zhai-Zhang}. 
 The main point of our approach is to prove the tightness of some controlled stochastic dynamical systems. For this purpose, we exploit estimates of the controlled stochastic dynamical systems which significantly differ from those in the Gaussian setting.

The organization of this paper is as follows. In Section 2, we introduce the assumptions and establish the well-posedness of the stochastically perturbed 
equation. Section 3 is devoted to establishing the
Freidlin-Wentzell's large deviation principle.

\section{Existence of a solution}

Let $(\Omega,\; \mathcal{F},\;\mathbb{F}:=\{\mathcal{F}_t\}_{t\in[0,T]},\; P)$ be a filtered probability space with an adapted,  standard cylindrical Brownian motion $W$ on a Hilbert space $H$. 
Let $\widetilde{N}$ be a compensated time homogeneous Poisson random measure on a Polish space $X$ with intensity $\nu$ and assume $\widetilde{N}$ to be independent of $W$. If $S$ is a separable metric space we denote the space of equivalence classes of random variables $Vc\colon \Omega\to S$ by $L^0(\Omega, S)$ and the Borel $\sigma$-algebra by $\Borel(S)$. 

Consider the following stochastic heat equation on $L^2:=L^2(\O)$ for a bounded domain $\O\subseteq \R^d$
for $t\in [0,T]$:
\begin{align}\label{eq heat}
 U(t)&=u_0+\int_0^t \left( \Delta U(s)+ F\big(s,U(s)\big)
 + \div B\big(s,U(s)\big)+
 \int_{-\infty}^0 \gamma(r)\Delta U(s+r)\, dr
 \right)\,ds \nonumber\\
 &\qquad
 + \int_0^t G_1\big(s,U(s)\big)\,dW(s)+ \int_0^t \int_X G_2\big(s,U(s-),x\big)\,\widetilde{N}(ds,dx),\\
 U(s)&=\varrho(s) \quad\text{for }s\le 0,\nonumber
\end{align}
where the initial condition is given by $u_0\in L^2$ and the square Bochner integrable function $\varrho\colon (-\infty,0]\to L^2$. The past dependence is described by the function $\gamma\in L^1(\R_{-},\Rp)$. The non-linear drift is described by
the operator $F\colon [0,T]\times L^2\to L^2$. The diffusion coefficients
are described by the operators $G_1\colon [0,T]\times L^2\to {\mathcal{L}_2}$ and
$G_2\colon  [0,T]\times L^2\times X\to L^2$, where ${\mathcal{L}_2}$ denotes the space of Hilbert-Schmidt operators from $H$ to $L^2$. 

We begin with introducing some notations. For $p\geq 1$, we denote by $L^p(\mathcal{O})$ the usual $L^p$-space over $\mathcal{O}$ with the standard norm $\|\cdot\|_p$.
For $m\in\mathbb{N}$, let $H^m_0(\mathcal{O})$ be the usual $m$-order Sobolev space over $\mathcal{O}$ with Dirichlet boundary conditions, and denote its
norm and the dual space by $\|\cdot\|_{2,m}$ and $H^{-m}(\mathcal{O})$, respectively. For  simplicity, we will write
$$
L^p:=L^p(\mathcal{O}),\ \ H^m_0:=H^m_0(\mathcal{O}),\ \ H^{-m}:=H^{-m}(\mathcal{O}).
$$
Sobolev's embedding theorem (see e.g. \cite{Adams}) guarantees for any $q\geq 2$ and $q^*:=q/(q-1)$ that
$H^d_0\hookrightarrow L^q$ and $L^{q^*}\hookrightarrow H^{-d}$.
It is well known that the Laplacian $\Delta$ establishes an isomorphism between $H^1_0$ and $H^{-1}$. Since $H^1_0$ coincides with the domain of the operator $(-\Delta)^{1/2}$,
we will use the following equivalent norm in $H^1_0$:
$$
\|u\|_{2,1}:=\|\nabla u\|_2=\|(-\Delta)^{1/2}u\|_2.
$$
Notice that there exists a constant $\lambda_1>0$ such that
$\lambda_1\|u\|_2^2\leq \|\nabla u\|_2^2$ for all $u\in H^1_0$.
For $q\geq 2$ we define $V_q:=H^1_0\cap L^q$ and $V^*_q:=H^{-1}+L^{q^*}$.
By identifying $L^2$ with itself by the Riesz representation, we obtain an evolution triple
$$
V_q\subset L^2\subset V^{*}_q.
$$
That is, for any $v\in V_q$ and $w=w_1+w_2\in H^{-1}+L^{q^*}$ we have
$$
\langle v,w\rangle_{V_q,V_q^*}=\langle v,w_1\rangle_{H^1_0,H^{-1}}+\langle v,w_2\rangle_{L^q,L^{q^*}}.
$$
For the simplicity of notation, when no confusion may arise, we will use the unified notation $\langle \cdot,\cdot\rangle$ to denote the above dual relations between different spaces.

Denote the Lebesgue measures on $[0,T]$ and $[0,\infty)$ by Leb$_T$ and Leb$_\infty$, respectively and define $\nu_T:={\rm Leb}_T\otimes\nu$. We introduce the function space
\begin{align*}
\mathcal{H}:=\Big\{h\colon [0,T]\times X&\rightarrow \mathbb{R}:\;\int_{\Gamma}\exp(\delta h^2(t,x))\,\nu(dx)\,dt<\infty,\\
&\text{for all }\Gamma\in \mathcal{B}([0,T]\times X) \text{ with }\nu_T(\Gamma)<\infty
\text{ and for some }\delta>0     \Big\}.
\end{align*}

Throughout this work we will assume the following assumptions:
\begin{enumerate}
\item[H1:] there exist constants $c_1$, $c_2>0$ and $h_1\in L^1([0,T],\Rp)$ such that
we have for all $v_1$, $v_2\in L^2$ and $t\in [0,T]$:
\begin{align*}
\norm{B(t,v_1)-B(t,v_2)}_2^2&\le c_1 \norm{v_1-v_2}_2^2,\\
\norm{B(t,v_1)}_2^2 &\le c_2 \norm{v_1}_2^2+ h_1(t).
\end{align*}
\item[H2:] there exist constants $c_3$, $c_4$, $c_5>0$, $q\ge 2$ and $h_2$, $h_3\in L^1([0,T],\Rp)$
 such that
we have for all $v_1$, $v_2\in L^2$ and $t\in [0,T]$:
\begin{align}
\langle v_1-v_2,F(t,v_1)-F(t,v_2)\rangle &\le c_3 \norm{v_1-v_2}_2^2, \\
\scapro{v_1}{F(t,v_1)}&\le -c_4 \norm{v_1}_{L^q }^q + h_2(t)\big(1+ \norm{v_1}_2^2 \big),\label{eq.scapro-g}\\
\norm{F(t,v_1)}_{L^{q^*} }^{q^*} &\le c_5\norm{v_1}_{L^q }^{q}+ h_3(t),\\
\text{for any } x\in L^q ,\ y,z\in H^1_0 ,&\text{ the mapping }\nonumber \\
 \eta\mapsto \langle x, &F(t,y+\eta z)\rangle_{L^q ,L^{q^*} }\text{ is continuous on $[0,1]$.}
\end{align}
\item[H3:] there exist $c_6>0$, $h_4\in L^1([0,T],\Rp)$ such that
we have for all $v_1$, $v_2\in L^2$ and $t\in [0,T]$:
\begin{align}
\norm{G_1(t,v_1)-G_1(t,v_2)}^2_{{\mathcal{L}_2}}&\le c_6\norm{v_1-v_2}_2^2,\label{eq.sigma_1-lipschitz}\\
\norm{G_1(t,v_1)}^2_{{\mathcal{L}_2}}&\le h_4(t)\big(1+  \norm{v_1}_2^2\big).
 \label{eq.sigma_1-growth}
\end{align}
\item[H4:]  there exist $h_5, h_6\in L^2_{\nu_T}([0,T]\times X, \R)\cap \mathcal{H}$  such that
we have for all $v_1$, $v_2\in L^2$, $x\in X$ and $t\in [0,T]$:
\begin{align}
\norm{G_2(t,v_1,x)-G_2(t,v_2,x)}_2&\le h_5(t,x)\norm{v_1-v_2}_2, \label{eq.sigma_2-lipschitz}\\
\norm{G_2(t,v_1,x)}_2&\le h_6(t,x)\big(1+\norm{v_1}_2\big).
  \label{eq.sigma_2-growth}
\end{align}
\end{enumerate}

Our main theorem in this section guarantees the existence of a unique solution of Equation \eqref{eq heat}.
\begin{theorem}\label{thm solution}
Assume (H1)-(H4). Then for every $(u_0,\varrho)\in L^2\times  L^2(\mathbb{R}_{-}, H^1_0 )$, there exists a unique $\mathbb{F}$-adapted stochastic process $U\in L^0\big(\Omega,\, D([0,T],L^2)\cap L^2([0,T],H^1_0 )\big)$ satisfying Equation 
\eqref{eq heat} and 
$$
E\left[\sup_{t\in[0,T]}\|U(t)\|^2_2+ \int_0^T\|U(t)\|^2_{2,1 }ds+\int_0^T\|U(t)\|^q_q \, dt\right]<\infty.
$$
\end{theorem}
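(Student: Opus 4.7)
The plan is to combine the classical variational theory for monotone stochastic PDE with a cutting-off device to bypass the lack of global Lipschitz continuity of $F$. As a preliminary step I would disentangle the memory: substituting $u=s+r$,
$$\int_{-\infty}^0\gamma(r)\Delta U(s+r)\,dr=\Phi(s)+\int_0^s\gamma(u-s)\Delta U(u)\,du,\qquad s\in[0,T],$$
where $\Phi(s):=\int_{-\infty}^{-s}\gamma(r)\Delta\varrho(s+r)\,dr$ is a deterministic inhomogeneity fixed by the past $\varrho$. Since $\varrho\in L^2(\R_{-},H^1_0)$ and $\gamma\in L^1$, Young's convolution inequality yields $\Phi\in L^2([0,T],H^{-1})$, while the Volterra operator $V\mapsto\int_0^\cdot\gamma(u-\cdot)\Delta V(u)\,du$ is bounded from $L^2([0,T],H^1_0)$ to $L^2([0,T],H^{-1})$. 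This recasts \eqref{eq heat} as a standard SPDE on the Gelfand triple $V_q\subset L^2\subset V_q^*$ with a nonlocal-in-time but past-Lipschitz drift perturbation.

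For each $N\in\N$, introduce a cut-off $F^N(t,v):=F(t,\psi_N(\|v\|_2)v)$ with $\psi_N\in C^\infty([0,\infty);[0,1])$ equal to $1$ on $[0,N]$ and vanishing outside $[0,N+1]$. Then $F^N$ inherits the monotonicity, hemicontinuity, and coercivity bounds of (H2) on the ball $\{\|v\|_2\le N\}$, and is uniformly bounded as a map $L^2\to V_q^*$. Together with the globally Lipschitz $B, G_1$ from (H1), (H3), the $v$-Lipschitz Poisson coefficient $G_2$ with random modulus $h_5$ from (H4), and the bounded linear memory perturbation from the first step, the truncated equation fits exactly into the classical variational framework for SPDE driven by a cylindrical Wiener process and a compensated Poisson random measure. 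A Galerkin projection onto the eigenbasis of $-\Delta$, combined with weak compactness and Minty's monotonicity trick, yields a unique $\mathbb{F}$-adapted solution $U^N\in L^0(\Omega, D([0,T],L^2)\cap L^2([0,T],V_q))$; uniqueness at the truncated level follows by Itô's formula applied to $\|U^N-\tilde U^N\|_2^2$, the monotonicity in (H2), the Lipschitz bounds from (H1), (H3), (H4), a Young-type estimate on the Volterra term absorbing a fraction of $\|\cdot\|_{2,1}^2$ into the Laplacian coercivity, and Gronwall.

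The crucial step is the $N$-uniform a priori bound. Applying Itô's formula to $\|U^N(t)\|_2^2$, I would use $\scapro{U^N}{\Delta U^N}=-\|U^N\|_{2,1}^2$, the coercivity \eqref{eq.scapro-g} delivering the $-c_4\|U^N\|_q^q$ sink, the linear growth of $B, G_1$, and, for the Poisson integral, the Bichteler-Jacod $L^2$-isometry combined with \eqref{eq.sigma_2-growth}. The memory contribution is treated by an $\eta$-Young inequality, extracting a small fraction of $\|U^N(s)\|_{2,1}^2$ to be absorbed into the Laplacian sink, while the remainder, proportional to $\int_0^s|\gamma(u-s)|\|U^N(u)\|_{2,1}^2\,du$, is swept up by Gronwall after interchange of integration using $\gamma\in L^1$. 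BDG then upgrades the bound to $E[\sup_{t\le T}\|U^N(t)\|_2^2]$. Defining $\tau^N:=\inf\{t\colon\|U^N(t)\|_2\ge N\}$, Markov's inequality yields $P(\tau^N<T)\to 0$, and the uniqueness of truncated solutions gives $U^N|_{[0,\tau^N]}=U^{N+1}|_{[0,\tau^N]}$, producing a global solution $U$ of \eqref{eq heat}; global uniqueness follows by replaying the Itô-monotonicity computation on any two solutions. I expect the main obstacle to be the uniform treatment of the Poisson integral: since the Lipschitz and growth moduli $h_5,h_6$ in (H4) are only $L^2_{\nu_T}$-integrable rather than bounded, the $\nu$-integrals of $h_5^2, h_6^2$ must be carried explicitly through every step of the Gronwall iteration, and the simultaneous presence of the memory convolution, the Poisson jumps and the super-linear coercivity in $L^q$ requires careful bookkeeping to keep the closed-form estimate intact.
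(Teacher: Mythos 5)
Your construction is a genuinely different route from the paper's. The paper does not truncate $F$ at all: it runs a Picard-type iteration in which $F$ is always evaluated at the \emph{current} iterate $U_{m,n}$ (so each step is an honest monotone SPDE whose solvability is taken from \cite[Th.3.2]{LXZ}), while the lagging iterate $U_{m,n-1}$ only enters through the memory convolution and the transport term $\operatorname{div} B$; independently it truncates the \emph{jump domain} via $K_m\uparrow X$, $\nu(K_m)<\infty$, rather than cutting off any coefficient in $v$. Your proposal instead cuts off $F$ via $F^N(t,v)=F(t,\psi_N(\|v\|_2)v)$, runs Galerkin plus Minty directly on the truncated problem, and then removes the cutoff with the stopping time $\tau^N$. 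This avoids any reliance on an existence result for the monotone equation with memory-free data (what \cite{LXZ} supplies to the paper), at the cost of having to rebuild the variational theory from scratch for the truncated equation with L\'evy noise.

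Two places where your argument, as written, does not close. First, the memory convolution. After Cauchy--Schwarz and Young, the Volterra part of the memory contributes to the It\^o identity a term of the form
\[
(\rho+\rho^{-1})\,\delta_t\int_0^t\|U(s)\|_{2,1}^2\,ds,\qquad\text{with }\delta_t=\int_{-t}^0|\gamma(r)|\,dr,
\]
after interchanging the order of integration, and the minimum over $\rho$ is $2\delta_t$. Since the Laplacian supplies only $-2\int_0^t\|U\|_{2,1}^2\,ds$, absorption requires $\delta_t<1$. But (H1)--(H4) only assume $\gamma\in L^1(\R_-,\R_+)$, not $\|\gamma\|_{L^1}<1$. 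Gronwall cannot rescue this, because the leftover term is of the same order $\int_0^t\|U\|_{2,1}^2$ as the coercivity on the left, not a lower-order $\int_0^t\|U\|_2^2$; there is no stray function of $t$ to iterate on. What the paper actually does is pick $T_0$ so small that $2\delta_{T_0}^2=1/2$, closes the estimate on $[0,T_0]$, and then bootstraps over $[T_0,2T_0],\dots$ (Step 4 in the paper's proof). Your plan needs the same small-interval iteration; as stated it silently assumes the smallness.

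Second, the cutoff $F^N(t,v)=F(t,\psi_N(\|v\|_2)v)$ is only guaranteed to satisfy the one-sided Lipschitz bound $\langle v_1-v_2,F^N(t,v_1)-F^N(t,v_2)\rangle\le c\|v_1-v_2\|_2^2$ when both arguments lie in the ball $\{\|v\|_2\le N\}$; for $\|v_1\|_2\le N<\|v_2\|_2$ one is comparing $F(v_1)$ with $F(\psi_N(\|v_2\|_2)v_2)$, and $\tilde v_1-\tilde v_2\ne v_1-v_2$ breaks the argument. The Galerkin/Minty step you invoke needs \emph{global} weak monotonicity of the full operator, so this must be repaired (for instance, by not truncating $F$ but instead truncating the whole drift by a smooth cut-off depending on $\|U(s-)\|_2$ and verifying post hoc that the solution agrees with the untruncated one up to $\tau^N$, or by keeping $F$ implicit as the paper does). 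The estimates that survive are exactly the ones you list, but the passage ``$F^N$ inherits the monotonicity\dots of (H2)'' is not justified as stated.
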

%
%
%

\begin{proof}
As the proof is rather standard, see for example \cite{LXZ}, we suppress some details.  \\
\noindent {\bf Step 1.}
As $\nu$ is $\sigma$-finite on the Polish space $X$, there exist measurable subsets $K_m\uparrow X$ satisfying $\nu(K_m)<\infty$ for all $m\in\N$. 
For each $m\in\N$ define the function $U_{m,0}\equiv 0$ and consider recursively
the equations
\begin{align}\label{eq.Picard}
U_{m,n}(t):=u_0 &+ \int_0^t \bigg(\Delta U_{m,n}(s) + F\big(s,U_{m,n}(s)\big) \notag \\
&\qquad + \div B\big(s,U_{m,{n-1}}(s)\big)+ \int_{-\infty}^0 \gamma(r)\Delta U_{m,{n-1}}(s+r)\, dr \bigg)\,ds\\
& + \int_0^t G_1\big(s,U_{m,n}(s)\big)\,dW(s)
+ \int_0^t \int_{K_m} G_2\big(s,U_{m,n}(s-),x\big)\,\widetilde{N}(ds,dx). \notag
\end{align}
As in the proof of \cite[Th.3.2]{LXZ} it follows that  
there exists an $\mathbb{F}$-adapted stochastic process $U_{m,n}\in L^0\big(\Omega,\, D([0,T],L^2)\cap L^2([0,T],H^1_0)\big)$ satisfying \eqref{eq.Picard} for each $m,n\in\N$.

In a first step we show that
there exists $T_0>0$ and a constant $C>0$ such that
for all $m$, $n\in\N$ we have:
\begin{align}
E\left[\sup_{t\in[0,T_0]}\norm{U_{m,n}(t)}_2^2
+ \int_0^{T_0}\norm{U_{m,n}(s)}_{2,1 }^2\,ds
+\int_0^{T_0}\norm{U_{m,n}(s)}_{L^q }^q\,ds \right]\le C.\label{eq u mn}
\end{align}
For this purpose, we apply It{\^o}'s formula to obtain
\begin{align}
\norm{U_{m,n}(t)}_2^2 
&= \norm{u_0}_2^2 -2 \int_0^t \norm{U_{m,n}(s)}_{2,1 }^2 \,ds\notag\\
&\quad+ 2\int_0^t \Big\langle U_{m,n}(s),\,\ F(s,U_{m,n}(s)) \notag \\
&\quad\qquad  + \div B\big(s,U_{m,{n-1}}(s)\big)+ \int_{-\infty}^0 \gamma(r)\Delta U_{m,{n-1}}(s+r)\, dr\Big\rangle \,ds \notag\\
&\quad +2 \int_0^t \scapro{U_{m,n}(s)}{G_1\big(s,U_{m,n}(s)\big)}
\,dW(s)+ \int_0^t \norm{G_1(s,U_{m,n}(s))}^2_{{\mathcal{L}_2}}\,ds\notag \\
&\quad +
2\int_0^t \int_{K_m} \scapro{U_{m,n}(s-)}{G_2\big(s,U_{m,n}(s-),x\big)}\,\widetilde{N}(ds,dx)\notag\\
&\quad + \int_0^t \int_{K_m} \norm{G_2\big(s,U_{m,n}(s-),x\big)}_2^2\,N(ds,dx).
\label{eq.ito}
\end{align}
It follows from \cite[Le.3.1]{LXZ} and Cauchy-Schwartz inequality 
that there exists a constant $a>0$ such that
\begin{align}
& 2 \int_0^t \abs{\scaprobb{U_{m,n}(s)}{\div B\big(s,U_{m,{n-1}}(s)\big)+ \int_{-\infty}^0 \gamma(r)\Delta U_{m,{n-1}}(s+r)\, dr}}\,ds\notag \\
&\quad \le \int_0^t \norm{U_{m,n}(s)}_{2,1 }^2ds +2\delta_t^2\int_0^t\norm{U_{m,n-1}(s)}_{2,1 }^2ds+ a +a\int_0^t\norm{U_{m,n-1}(s)}_2^2\,ds,\label{eq.aux1.P4}
\end{align}
where  $\delta_t:=\int_{-t}^0|\gamma(s)|ds$.
Assumption \eqref{eq.scapro-g} guarantees that
\begin{align}
&  \int_0^t \scapro{U_{m,n}(s)}{F(s,U_{m,n}(s))}\,ds\notag \\
&\qquad
\le -c_4 \int_0^t \norm{U_{m,n}(s)}_{L^q }^qds + \int_0^t h_2(s)\big(1+ \norm{U_{m,n}(s)}_2^2\big)\, ds,
\end{align}
and Assumption \eqref{eq.sigma_1-growth} guarantees that
\begin{align}
\int_0^t \norm{G_1(s,U_{m,n}(s))}^2_{{\mathcal{L}_2}}\,ds
\le \int_0^t h_4(s)\big(1+  \norm{U_{m,n}(s)}_2^2\big)ds.\label{eq.aux2}
\end{align}
By applying \eqref{eq.aux1.P4} - \eqref{eq.aux2} to equation \eqref{eq.ito} we obtain
\begin{align}\label{eq 2.16}
&\norm{U_{m,n}(t)}_2^2 +\int_0^t \norm{U_{m,n}(s)}_{2,1 }^2\,ds
  + c_4 \int_0^t \norm{U_{m,n}(s)}_{L^q }^q\,ds \notag\\
&\quad \le \norm{u_0}_2^2 +
\int_0^t \big(2h_2(s)+h_4(s)\big)\big(1+\norm{U_{m,n}(s)}_2^2\big)\,ds \notag\\
&\qquad +2\delta_t^2\int_0^t\norm{U_{m,n-1}(s)}_{2,1 }^2ds+ a +a\int_0^t\norm{U_{m,n-1}(s)}_2^2\,ds \notag\\
&\qquad +2 \int_0^t \scapro{U_{m,n}(s)}{G_1\big(s,U_{m,n}(s)\big)}
\,dW(s)\notag\\
&\qquad +2\int_0^t \int_{K_m} \scapro{U_{m,n}(s-)}{G_2\big(s,U_{m,n}(s-),x\big)}\,\widetilde{N}(ds,dx)\notag\\
&\qquad + \int_0^t \int_{K_m} \norm{G_2\big(s,U_{m,n}(s-),x\big)}_2^2\,N(ds,dx).
\end{align}
By applying Burkholder's inequality and Young's inequality, we obtain that for every $r>0$ and $\kappa_1\in (0,1)$ there exists a constant
$c_{\kappa_1}>0$ such that
\begin{align}
&E\left[\sup_{t\in [0,r]}\abs{\int_0^t \scapro{U_{m,n}(s)}{G_1\big(s,U_{m,n}(s)\big)}\,dW(s)}\right]\notag \\
&\qquad\le \kappa_1 E\left[\sup_{s\in [0,r]}\norm{U_{m,n}(s)}_2^2\right]
+ c_{\kappa_1} E\left[\int_0^r \norm{G_1(s,U_{m,n}(s)}_{{\mathcal{L}_2}}^{2}\,ds\right]\notag\\
&\qquad \le  \kappa_1 E\left[\sup_{s\in [0,r]}\norm{U_{m,n}(s)}_2^2\right]
+ c_{\kappa_1}  E\left[\int_0^r h_4(s)\big(1+ \norm{U_{m,n}(s)}_2^2\big) \,ds\right],\label{eq.kappa1}
\end{align}
where we used Assumption \eqref{eq.sigma_1-growth} in the last equality.
Similarly, we conclude from Burkholder's inequality, Young's inequality and Assumption \eqref{eq.sigma_2-growth}, that for each $r>0$ and $\kappa_2\in (0,1)$ there exists a constant $c_{\kappa_2}>0$ such that
\begin{align}
&E\left[\sup_{t\in [0,r]}\abs{\int_0^t \int_{K_m} \scapro{U_{m,n}(s-)}{G_2\big(s,U_{m,n}(s-),x\big)}\,\widetilde{N}(ds,dx)}\right]\notag \\
&\qquad \le \kappa_2 E\left[\sup_{s\in [0,r]}\norm{U_{m,n}(s)}_2^2\right] + c_{\kappa_2} E\left[ \int_0^r
  h_6(s)\big(1+ \norm{U_{m,n}(s)}_2^2\big)\, ds \right], \label{eq.kappa2}
\end{align}
with $h_6(s):=\int_{X}h_6^2(s,x)\,\nu(dx)$ for all $s\in [0,T]$.
Another application of Assumption \eqref{eq.sigma_2-growth} implies for each $r\in [0,T]$ that 
\begin{align}
&E\left[\int_0^r \int_{K_m} \norm{G_2\big(s,U_{m,n}(s-),x\big)}_2^2\,N(ds,dx)\right]\notag\\
&\qquad\qquad\qquad  \le E\left[ \int_0^r  h_6(s)\big(1+ \norm{U_{m,n}(s)}_2^2\big)\, ds \right].
\label{eq.comp-Poisson}
\end{align}
By choosing $\kappa_1=\kappa_2:=\tfrac{1}{4}$ 
and applying \eqref{eq.kappa1} - \eqref{eq.comp-Poisson} to inequality
\eqref{eq 2.16} we obtain for each $r\in [0,T]$ that 
\begin{align}\label{eq.inequality-Step1}
&\tfrac{1}{2} E\left[\sup_{t\in[0,r]}\norm{U_{m,n}(t)}_2^2\right] +E\left[\int_0^r \norm{U_{m,n}(s)}_{2,1 }^2\,ds\right]  + c_4 E\left[ \int_0^r \norm{U_{m,n}(s)}_{L^q }^q\,ds\right] \notag \\
&\qquad \le \norm{u_0}_2^2 + 2\delta_r^2 E \left[\int_0^r\norm{U_{m,n-1}(s)}^2_{2,1}\, ds\right]+a+a E \left[\int_0^r\norm{U_{m,n-1}(s)}^2_2ds\right]\notag \\
&\qquad\qquad  +  \int_0^r h(s) \left(1+ E\left[\norm{U_{m,n}(s)}_2^2\right]\right)\, ds,
\end{align}
where $h(s):=2h_2(s) + (1+c_{\kappa_1})h_4(s) + (1+c_{\kappa_2})h_6(s)$. 
Define the functions $\alpha_N^m\colon [0,T]\to \R$ and $\beta_N^m \colon [0,T]\to \R$ by 
$$
\alpha^m_N(r):=\sup_{n\leq N} E\left[\sup_{t\in[0,r]}\norm{U_{m,n}(t)}^2_2\right],\qquad \beta^m_N(r):=\sup_{n\leq N} E\left[\int_0^r\norm{U_{m,n}(t)}^2_{2,1 }dt\right].
$$
Inequality \eqref{eq.inequality-Step1} implies  for each $r\in [0,T]$:
\begin{align*}
\tfrac{1}{2} \alpha^m_N(r)+\beta^m_N(r)
\le \norm{u_0}^2 + 2\delta_r^2  \beta_N^m(r) + a+ \int_0^r h(s)\,ds + 
\int_0^r \big(a+h(s)\big)\alpha_N^m (s)\,ds. 
\end{align*}
By choosing $T_0\in [0,T]$ such that $2\delta_{T_0}^2=1/2$, 
we conclude from Gronwall's inequality that 
\begin{align*}
E\left[\sup_{t\in[0,T_0]}\norm{U_{m,n}(t)}_2^2 + \int_0^{T_0} \norm{U_{m,n}(s)}_{2,1 }^2\,ds\right]
\le  C
\end{align*}
for a constant $C>0$. Applying this to inequality \eqref{eq.inequality-Step1} completes the 
proof of \eqref{eq u mn}. 

\noindent
{\bf Step 2. } For $m\in\N$ and $n_1$, $n_2\in \N$ define 
$
\Gamma^m_{n_1,n_2}(t):= U_{m,n_1}(t)-U_{m,n_2}(t).
$
By similar arguments as in Step 1 and by Gronwall's lemma we obtain
\begin{align*}
\lim_{n_1,n_2\rightarrow +\infty}\left\{E\left[\sup_{t\in[0,T_0]}\norm{\Gamma^m_{n_1,n_2}(t)}^2_2\right]
                                       +
                                      E\left[\int_0^{T_0}\norm{\Gamma^m_{n_1,n_2}(t)}^2_{2,1 }dt\right]
\right\}
=0.
\end{align*}
Hence, there exists a adapted 
 process $U_m\in L^0\big(\Omega,\, D([0,T],L^2)\cap L^2([0,T],H^1_0 )\big)$ for each $m\in\N$ such that
$$
\lim_{n\rightarrow\infty} E\left[\sup_{s\in[0,T_0]}\norm{U_{m,n}(s)-U_m(s)}^2_2\right]
=
\lim_{n\rightarrow\infty} E \left[\int_0^{T_0}\norm{U_{m,n}(s)-U_m(s)}^2_{2,1 }ds\right]
=
0,
$$
and, due to \eqref{eq u mn}, satisfying
\begin{align}\label{eq u m}
E\left[\sup_{t\in[0,T_0]}\norm{U_m(t)}^2_2\right]
+
E\left[\int_0^{T_0}\norm{U_m(t)}^2_{2,1 }dt\right]
+
E\left[\int_0^{T_0}\norm{U_m(t)}^q_{L^q }dt\right]
\leq C.
\end{align}
Taking the limit in  (\ref{eq.Picard}) as $n\rightarrow \infty$ shows that $U_m$ is 
the unique solution of the equation 
\begin{align}\label{eq u m solution}
U_m(t)=u_0&+\int_0^t \bigg( \Delta U_m(s) + F\left(s, U_m(s)\right)\notag\\
&\qquad  + \div B\big(s,U_m(s)\big)+ \int_{-\infty}^0 \gamma(r)\Delta U_m(s+r)\, dr \bigg) \, ds \nonumber\\    
      &  +   \int_0^t G_1(s,U_m(s))\, dW(s) +\int_0^t\int_{K_m}G_2(s, U_m(s-),x)\,\widetilde{N}(dx,ds).
\end{align}

{\bf Step 3.} For $m$, $n\in\N$ with $n>m$ define $\Gamma_{n,m}(t):=U_n(t)-U_m(t)$. 
Applying It\^o's formula and similar arguments as in Step 1 result in 
\begin{align}\label{eq.Ito-Step3}
&\norm{\Gamma_{n,m}(t)}^2_2+(1-2\delta_t^2)\int_0^t\norm{\Gamma_{n,m}(s)}^2_{2,1 }ds\notag \\
&\qquad \leq
c\int_0^t\norm{\Gamma_{n,m}(s)}^2_2ds + 2\int_0^t \langle \Gamma_{n,m}(s), G_1(s,U_n(s))-G_1(s,U_m(s))\rangle \,dW(s) \notag \\
&\qquad\qquad  +2\int_0^t\int_{K_m} \langle \Gamma_{n,m}(s-),G_2(s, U_n(s-),x)-G_2(s, U_m(s-),x)\rangle \,\widetilde{N}(dx,ds) \notag  \\
&\qquad\qquad  +\int_0^t\int_{K_m} \norm{G_2(s, U_n(s-),x)-G_2(s, U_m(s-),x)}^2_2 \,N(dx,ds)\notag  \\
&\qquad\qquad  +2\int_0^t\int_{K_n\setminus K_m} \langle \Gamma_{n,m}(s-),G_2(s, U_n(s-),x)\rangle \,\widetilde{N}(dx,ds)\notag  \\
&\qquad\qquad  +\int_0^t\int_{K_n\setminus K_m} \norm{G_2(s, U_n(s-),x)}^2_2 \,N(dx,ds).
\end{align}
It follows from  \eqref{eq.sigma_1-lipschitz} by Burkholder's inequality and Young's inequality that for each 
$\kappa_1\in (0,1)$ there exists a constant $c_{\kappa_1}>0$ such that 
\begin{align}\label{eq.Burkholder-Step3}
&E \left(\sup_{t\in[0,l]}\Big|2\int_0^t \langle \Gamma_{n,m}(s), G_1(s,U_n(s))-G_1(s,U_m(s))\rangle dW(s)\Big|\right)\nonumber\\
&\quad \leq c E\left(\int_0^l \norm{\Gamma_{n,m}(s)}^2_2\norm{G_1(s,U_n(s))-G_1(s,U_m(s))}^2_{L^2(H,L^2)}ds\right)^{1/2}\nonumber\\
&\quad \leq \kappa_1 E\left(\sup_{t\in[0,l]}\norm{\Gamma_{n,m}(t)}^2_2\right)
            +
            C_{\kappa_1}E\left(\int_0^l\norm{\Gamma_{n,m}(s)}^2_2ds\right),
\end{align}
In the same way it follows from \eqref{eq.sigma_2-lipschitz} that for each $\kappa_2$, $\kappa_3\in (0,1)$ there exist 
constants $c_{\kappa_2}$, $c_{\kappa_3}>0$ such that
\begin{align}
&E \left(\sup_{t\in[0,l]}\left|2\int_0^t\int_{K_m} \langle \Gamma_{n,m}(s-),G_2(s, U_n(s-),x)-G_2(s, U_m(s-),x)\rangle\,  \widetilde{N}(dx,ds)\right|\right)\nonumber\\
&\quad \leq \kappa_2 E\left(\sup_{t\in[0,l]}\norm{\Gamma_{n,m}(t)}^2_2\right)
            +
            c_{\kappa_2} E \left(\int_0^l\int_{K_m}h_5^2(s,x)\norm{\Gamma_{n,m}(s)}^2_2 \,\nu(dx)ds\right),
\end{align}
and from \eqref{eq.sigma_2-growth} that
\begin{align}
&E \left(\sup_{t\in[0,l]}\abs{2\int_0^t\int_{K_n\setminus K_m} \langle \Gamma_{n,m}(s-),G_2(s, U_n(s-),x)\rangle \widetilde{N}(dx,ds)}\right) \\
&\quad \leq \kappa_3 E\left(\sup_{t\in[0,l]}\norm{\Gamma_{n,m}(t)}^2_2\right)
            +
            c_{\kappa_3}E \left(\sup_{s\in[0,l]}\left(\norm{U_n(s)}^2_2+1\right)\right)\int_0^l\int_{K_n\setminus K_m}h_6^2(s,x)\,\nu(dx)ds.\notag
\end{align}
Another application of \eqref{eq.sigma_2-lipschitz} and \eqref{eq.sigma_2-growth} imply 
\begin{align}
&E \left(\int_0^t\int_{K_m} \norm{G_2(s, U_n(s-),x)-G_2(s, U_m(s-),x)}^2_2 \, N(dx,ds)\right)\nonumber\\
&\qquad \qquad \leq E \left(\int_0^t\int_{K_m}h_5^2(s,x)\norm{\Gamma_{n,m}(s)}^2_2 \, \nu(dx)ds\right),
\intertext{and}
&E \left(\int_0^t\int_{K_n\setminus K_m} \norm{G_2(s, U_n(s-),x)}^2_2 \, N(dx,ds)\right)\nonumber\\
&\qquad \qquad \leq E \left(\sup_{s\in[0,t]}\left(\norm{U_n(s)}^2_2+1\right)\right)\int_0^t\int_{K_n\setminus K_m}h_6^2(s,x)\,\nu(dx)ds.\label{eq.H4-Step3}
\end{align}
Define the functions $\alpha_{n,m}\colon [0,T]\to \R$ and $\beta_{n,m}\colon [0,T]\to \R$ by 
$$
\alpha_{n,m}(l):=E\left(\sup_{t\in[0,l]}\norm{\Gamma_{n,m}(t)}^2_2\right),\qquad  \beta_{n,m}(l):=E\left(\int_0^l\norm{\Gamma_{n,m}(s)}^2_{2,1 }ds\right).
$$
By choosing $\kappa_1=\kappa_2=\kappa_3=1/6$ and recalling $2\delta^2_{T_0}=\tfrac{1}{2}$, we obtain by applying 
\eqref{eq.Burkholder-Step3} -- \eqref{eq.H4-Step3} to the inequality \eqref{eq.Ito-Step3} that 
\begin{align*}
& \tfrac{1}{2}\alpha_{n,m}(T_0)+ \tfrac{1}{2} \beta_{n,m}(T_0)\nonumber\\
&\qquad \leq
       \int_0^{T_0}\left(c+c_{\kappa_1}+(1+ c_{\kappa_2})\int_{K_m}h_5^2(s,x)\, \nu(dx)\right)\alpha_{n,m}(s)\, ds\nonumber\\
&\qquad\qquad 
       +
      (1+c_{\kappa_3})E \left(\sup_{s\in[0,T_0]}\left(\norm{U_n(s)}^2_2+1\right)\right)\int_0^{T_0}\int_{K_n\setminus K_m}h_6^2(s,x)\nu(dx)ds.
\end{align*}
Applying Gronwall's inequality and using $\int_0^{T_0}\int_{K_n\setminus K_m}h_6^2(s,x)\nu(dx)ds \to 0$
as $m,n\to \infty$ together with \eqref{eq u m} implies
\begin{align*}
 \lim_{n,m\rightarrow\infty} \alpha_{n,m}(T_0)+\beta_{n,m}(T_0)=0.
\end{align*}
Hence, there exists an $\mathbb{F}$-adapted process $U\in L^0\big(\Omega, \, D([0,T_0],L^2)\cap L^2([0,T_0],H^1_0 )\big)$ such that
$$
\lim_{n\rightarrow\infty} E\left[\sup_{s\in[0,T_0]}\norm{U_{m}(s)-U(s)}^2_2\right]
=
\lim_{n\rightarrow\infty} E \left[\int_0^{T_0}\norm{U_{m}(s)-U(s)}^2_{2,1 }ds\right]
=
0,
$$
which, due to \eqref{eq u m}, satisfies 
\begin{align}\label{eq u}
E\left[\sup_{t\in[0,T_0]}\norm{U(t)}^2_2\right]
+
E\left[\int_0^{T_0}\norm{U(t)}^2_{2,1 }dt\right]
+
E\left[\int_0^{T_0}\norm{U(t)}^q_{L^q }dt\right]
\leq C.
\end{align}
Taking limits in (\ref{eq u m solution}) shows that  that $U$ is the unique solution of \ref{eq heat} on the interval $[0,T_0]$.

{\textbf{Step 4.}} By repeating the above arguments we obtain the existence of a unique solution 
of \eqref{eq heat} on the interval $[T_0,2T_0]$ which finally leads to the completion of  the proof 
by further iterations. 
\end{proof}

\section{Large Deviation Principle}



Recall that $H$ is a separable Hilbert space with an orthonormal basis $\{u_i\}_{i\in\mathbb{N}}$ and assume that $X$ is a locally compact Polish space with a $\sigma$-finite measure $\nu$ defined on $\Borel(X)$. 

Let $S$ be a locally compact Polish space. The space of all Borel measures on $S$ is denoted by $M(S)$ and the set of all  $\mu\in M(S)$ with $\mu(K)<\infty$
for each compact set $K\subseteq S$ is denoted by $M_{FC}(S)$. We endow $M_{FC}(S)$ with the weakest topology such that for each $f\in C_c(S)$ the mapping
$\mu\in M_{FC}(S)\rightarrow \int_Sf(s)\mu(ds)$ is continuous. This topology is metrizable such that $M_{FC}(S)$ is a Polish space, see \cite{Budhiraja-Dupuis-Maroulas.} for more details.
\vskip 0.2cm

In this section we specify the underlying probability space $(\Omega, \F, {\mathbb F}:=\{\F_t\}_{t\in [0,T]},P)$ in the following way: 
\begin{align*}
  \Omega:=C\big([0,T];H\big)\times M_{FC}\big([0,T]\times X
  \times [0,\infty)\big),\qquad \F:=\Borel(\Omega).
\end{align*}
We introduce the functions
\begin{align*}
&W\colon \Omega \rightarrow C\big([0,T];H\big),
\qquad W(\alpha,\beta)(t)=\sum_{i=1}^\infty{ \scapro{\alpha(t)}{u_i} u_i},\\
& N\colon \Omega \rightarrow M_{FC}\big([0,T]\times X\times [0,\infty)\big),\qquad  N(\alpha,\beta)=\beta.
\end{align*}
Define for each $t\in [0,T]$ the $\sigma$-algebra
\begin{align*}
\mathcal{G}_{t}:=\sigma\left(\left\{\big(W(s), \, N((0,s]\times A)\big):\,
0\leq s\leq t,\,A\in \mathcal{B}\big(X\times [0,\infty)\big)\right\}\right).
\end{align*}
For a given $\nu\in M_{FC}(X)$, it follows from \cite[Sec.I.8]{Ikeda-Watanabe} that there exists a unique probability measure $P$
 on $(\Omega,\mathcal{B}(\Omega))$ such that:
\begin{enumerate}
\item[(a)] $W$ is a cylindrical Brownian motion in $H$;
\item[(b)] $N$ is a Poisson random measure on $\Omega$ with intensity measure $\text{Leb}_T\otimes\nu\otimes \text{Leb}_\infty$;
\item[(c)] $W$ and $N$ are independent.
\end{enumerate}
We denote by $\mathbb{F}:=\{{\mathcal{F}}_{t}\}_{t\in[0,T]}$ the
$P$-completion of $\{\mathcal{G}_{t}\}_{t\in[0,T]}$ and by
$\mathcal P$ the $\mathbb{F}$-predictable $\sigma$-field
on $[0,T]\times \Omega$. 
Define
\begin{align*}
{\mathcal R}
:=\left\{\varphi\colon [0,T]\times X\times\Omega\to [0,\infty):
\, (\mathcal{P}\otimes\mathcal{B}(X))\setminus\mathcal{B}[0,\infty)\text{-measurable}\right\}.
\end{align*}
For $\varphi\in{\mathcal R}$, define a
counting process $N^{\varphi}$ on $[0,T]\times {X}$ by
   \begin{align*}
      N^\varphi((0,t]\times A)(\cdot)=\int_{(0,t]\times A\times (0,\infty)}\1_{[0,\varphi(s,x,\cdot)]}(r)\, N(ds, dx, dr),
   \end{align*}
for $t\in[0,T]$ and $A\in\mathcal{B}(X)$.

For each $f\in L^2([0,T],H)$, we introduce the quantity
\begin{align*}
Q_{1}(f)
:=\frac{1}{2}\int_{0}^{T}\norm{f(s)}_{H}^{2}\,ds,
\end{align*}
and we define for each $m\in\N$ the space
\begin{align*}
 S_1^m:=\Big\{f\in L^{2}([0,T],H):\,Q_1(f)\leq m\Big\}.
\end{align*}
Equiped with the weak topology, $S_1^m$ is a compact subset of $L^2([0,T],H).$
We will throughout consider $S_1^m$ endowed with this topology.

By defining the function
\begin{align*}
\ell:[0,\infty)\rightarrow[0,\infty), \qquad
\ell(x)=x\log x-x +1
\end{align*}
we introduce for each measurable function
$g\colon [0,T]\times X\to [0,\infty)$ the quantity
\begin{align*}
Q_2(g):=\int_{[0,T]\times X}\ell\big(g(s,x)\big) \,ds \,\nu(dx).
\end{align*}
Define for each $m\in\N$ the space
\begin{align*}
     S_2^m:=\Big\{g:[0,T]\times X\rightarrow[0,\infty):\,Q_2(g)\leq m\Big\}.
\end{align*}
A function $g\in S_2^m$ can be identified with a measure $\hat{g}\in M_{FC}([0,T]\times X)$, defined by
   \begin{align}\label{eq.corres-func-meas}
      \hat{g}(A)=\int_A g(s,x)\,ds\,\nu(dx)\ \quad\text{ for all } A\in\mathcal{B}([0,T]\times X).
   \end{align}
This identification induces a topology on $S_2^m$ under which $S_2^m$ is a compact space, see the Appendix of \cite{Budhiraja-Chen-Dupuis}.
Throughout, we use this topology on $S_2^m$.\\

%
%

%
%

On the probability space $(\Omega,\Borel(\Omega),\F,{\mathbb F},P)$
carrying the cylindrical Brownian motion $W$ and the compensated Poisson random measure
$\widetilde{N}^{\epsilon^{-1}}(ds,dx):=N^{\epsilon^{-1}}(ds,dx)-\epsilon^{-1}\,\nu(dx)\,ds$ we consider for each $\epsilon>0$ the following stochastic heat equation
for $t\in [0,T]$:
\begin{align}\label{eq G}
U_\epsilon(t)&=
 u_0+\int_{0}^t \Big(\Delta U_\epsilon(s) + {\rm div}\, B\big(s,U_\epsilon(s)\big)
  +  F\big(s,U_\epsilon(s)\big)+\int_{-\infty}^0\gamma(r) \Delta U_\epsilon(s+r)\,dr\Big)\, ds\notag\\
&\qquad +\sqrt{\epsilon}\int_0^t G_1\big(s,U_\epsilon(s)\big)\,dW(s)+\epsilon\int_0^t\int_{X} G_2\big(s,U_\epsilon(s-),x\big)\,\widetilde{N}^{\epsilon^{-1}}(ds,dx),\\
U_\epsilon(s)&=\varrho(s)\text{ for }s<  0.  \notag
\end{align}
The initial condition is given by  $(u_0,\varrho)\in {L^2}\times L^2(\R_{-},H_0^1)$.
Theorem \ref{thm solution} guarantees that for each $\epsilon>0$
there exists a unique solution $U_\epsilon:=U_\epsilon(u_0,\varrho)$ with trajectories in
the space $\mathcal{D}:=D([0,T],{L^2})\cap L^2([0,T],{H^1_0 })$.
\begin{theorem}
Under the assumption (H1)-(H4), the solutions $\{U_\epsilon:\, \epsilon>0\}$ of \eqref{eq G}  satisfy a large deviation principle on $\mathcal{D}$ with rate function $I\colon \mathcal{D}\to [0,\infty]$,
 where
\begin{align*}
I(\xi):=\inf\left\{Q_1(f)+Q_2(g):\, \xi=u(f,g),\,
f\in S_1^m, g\in S_2^m \text{ and }m\in\N \right\},
\end{align*}
and $u=u(f,g)\in\mathcal{D}$ solves the following deterministic partial differential
equation:
\begin{align}\label{eq u q}
u(t)&= u_0+\int_{0}^t \Big(\Delta u(s) + {\rm div}\, B\big(s,u(s)\big)
  +  F\big(s,u(s)\big)+\int_{-\infty}^0\gamma(r) \Delta u(s+r)\,dr \Big)\, ds \nonumber \\
&\qquad +\int_0^t G_1\big(s, u(s)\big)f(s)\,ds
  + \int_0^t\int_{X}G_2\big(s,u(s),x\big)\big(g(s,x)-1\big)\,\nu(dx)\,ds, \\
u(s)&=\varrho(s)\text{ for }s<0.\nonumber
\end{align}
\end{theorem}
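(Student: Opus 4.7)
The strategy is to apply the weak convergence approach of Budhiraja, Chen and Dupuis. The existence and uniqueness result of Section 2 yields a measurable map $\mathcal{G}^\epsilon$ with $U_\epsilon = \mathcal{G}^\epsilon(\sqrt{\epsilon}\,W,\,\epsilon\,N^{\epsilon^{-1}})$, so the stated LDP with rate function $I$ reduces to verifying two sufficient conditions. Condition (I): for each $m \in \mathbb{N}$, the map $(f,g) \mapsto u(f,g)$ is continuous from $S_1^m \times S_2^m$ (equipped with the topologies specified above) into $\mathcal{D} := D([0,T], L^2)\cap L^2([0,T], H^1_0)$. Condition (II): for every family of $\mathbb{F}$-predictable controls $\{(f_\epsilon, g_\epsilon)\}$ with values in $S_1^m \times S_2^m$ such that $(f_\epsilon, g_\epsilon) \to (f,g)$ in distribution, the corresponding controlled solutions $\tilde{U}_\epsilon$ defined below converge in distribution in $\mathcal{D}$ to $u(f,g)$.

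The first task is to establish well-posedness of the skeleton equation \eqref{eq u q} for every $(f,g) \in S_1^m \times S_2^m$, together with a priori bounds uniform in $(f,g)$. The existence argument parallels Theorem \ref{thm solution}: truncate $g$ on sets of finite $\nu$-measure, run a Picard iteration driven by the deterministic controls, and pass to the limit. The crucial new ingredient is the control of the compensator term $\int_0^t\int_X G_2(s, u(s), x)(g(s,x)-1)\,\nu(dx)\,ds$. I would decompose $g-1=(g-1)\mathbf{1}_{\{|g-1|\le 1\}}+(g-1)\mathbf{1}_{\{|g-1|>1\}}$, use the pointwise inequality $|x-1|^2 \le 3\ell(x)$ on the first set, and employ the variational identity linking $\ell$ to exponential moments on the second, exploiting $h_5,h_6 \in \mathcal{H}$ together with $Q_2(g)\le m$.

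For Condition (I), let $(f_n, g_n)\to (f,g)$ in $S_1^m\times S_2^m$ and set $u_n:=u(f_n, g_n)$. The uniform a priori bounds and an Aubin--Lions compactness argument yield a subsequence converging weakly in $L^2([0,T],H^1_0)$ and strongly in $L^2([0,T],L^2)$. Strong $L^2$-convergence suffices to pass to the limit in $F(\cdot, u_n)$, $B(\cdot, u_n)$ and $G_1(\cdot,u_n)f_n$; the memory integral is handled by Fubini and $\gamma\in L^1(\mathbb{R}_-)$; the compensator $\int G_2(\cdot, u_n,x)(g_n-1)\,d\nu\,ds$ is treated via a Vitali-type argument using the $S_2^m$-topology together with the exponential bound on $h_6$. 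Uniqueness of the limiting equation then forces convergence of the whole sequence.

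For Condition (II), define $\tilde{U}_\epsilon$ as the solution of
\begin{align*}
\tilde{U}_\epsilon(t) &= u_0 + \int_0^t \Big(\Delta\tilde{U}_\epsilon(s) + F(s,\tilde{U}_\epsilon(s)) + \div B(s,\tilde{U}_\epsilon(s)) \\
&\qquad\qquad + \int_{-\infty}^0\gamma(r)\Delta\tilde{U}_\epsilon(s+r)\,dr + G_1(s,\tilde{U}_\epsilon(s)) f_\epsilon(s)\Big)\,ds \\
&\quad + \sqrt{\epsilon}\int_0^t G_1(s,\tilde{U}_\epsilon(s))\,dW(s) + \epsilon\int_0^t\!\!\int_X G_2(s,\tilde{U}_\epsilon(s-),x)\,\widetilde{N}^{\epsilon^{-1}g_\epsilon}(ds,dx) \\
&\quad + \int_0^t\!\!\int_X G_2(s,\tilde{U}_\epsilon(s),x)\bigl(g_\epsilon(s,x)-1\bigr)\,\nu(dx)\,ds,
\end{align*}
which by Girsanov's theorem for Brownian motion and Poisson random measures has the same law as $\mathcal{G}^\epsilon\bigl(\sqrt{\epsilon}\,W+\int_0^\cdot f_\epsilon\,ds,\ \epsilon\,N^{\epsilon^{-1}g_\epsilon}\bigr)$. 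Itô's formula combined with the coercivity and growth bounds in (H2)--(H4) and the $(g_\epsilon-1)$-splitting from the previous step yields uniform-in-$\epsilon$ moment estimates for $\tilde{U}_\epsilon$ in $\mathcal{D}$; jumps of $\tilde{U}_\epsilon$ are of size $O(\epsilon)$ and hence vanish in the limit. Tightness in $D([0,T], L^2)$ then follows from Aldous' criterion, while weak tightness in $L^2([0,T], H^1_0)$ is immediate from the uniform bound; Skorokhod representation lets one pass to the limit in each term of the controlled equation and identify the limit with $u(f,g)$ via uniqueness. The hard part is precisely this tightness and convergence analysis in Condition (II): since $g_\epsilon - 1$ is controlled only through the non-quadratic functional $Q_2$, the standard $L^2$-isometry for Poisson stochastic integrals does not apply, and the argument must rely essentially on the exponential integrability built into $\mathcal{H}$. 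This is the structural feature that distinguishes the Lévy-type case from the Gaussian setting of \cite{LXZ}.
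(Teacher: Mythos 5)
Your proposal correctly identifies the overall strategy: realize $U_\epsilon$ as $\mathcal{G}^\epsilon(\sqrt{\epsilon}\,W,\epsilon N^{\epsilon^{-1}})$ and verify the two weak-convergence conditions of Budhiraja--Chen--Dupuis, with the controlled equation for $\tilde U_\epsilon$ (called $V_\epsilon$ in the paper) obtained via a Girsanov-type change of measure, and with the compensator term controlled through exponential integrability of $h_5,h_6\in\mathcal{H}$ rather than any $L^2$-isometry. Up to this point you match the paper. However, your route to tightness and convergence in Condition (II) has two genuine gaps.

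First, the claim that $\tilde U_\epsilon$ is tight in $D([0,T],L^2)$ via Aldous' criterion is unlikely to go through. The available a priori bounds (Lemma \ref{lemma3.1}) control $\sup_t\|\tilde U_\epsilon(t)\|_2^2$ and $\int_0^T\|\tilde U_\epsilon\|_{2,1}^2\,dt$, but they do \emph{not} give you modulus-of-continuity estimates for $t\mapsto \tilde U_\epsilon(t)$ in the $L^2$ norm: the Laplacian maps the solution into $H^{-1}$ only, so increments $\|\tilde U_\epsilon(t)-\tilde U_\epsilon(s)\|_2$ are not controllable uniformly from the $H^1_0$ bound, and Aldous' criterion for $D([0,T],L^2)$ fails to be verifiable. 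The paper circumvents exactly this by decomposing the controlled solution as $V_\epsilon = Y_\epsilon + Z_\epsilon + J_\epsilon$, where the martingale part $Y_\epsilon$ vanishes with rate $\epsilon$, the compensator part $Z_\epsilon$ is tight in $C([0,T],L^2)$ thanks to analytic-semigroup smoothing (Proposition 3.9 of \cite{YZZ}), and the remaining drift-plus-memory part $J_\epsilon$ is tight only in the much weaker space $C([0,T],H^{-d})\cap L^2([0,T],L^2)$ via the H\"older-in-time estimate \eqref{eq star 1} and \cite[Lemma 4.3]{GRZ}. This decomposition is the technical device that makes the tightness claim provable; it is not optional.

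Second, even if tightness were obtained, you only assert ``weak tightness in $L^2([0,T],H^1_0)$'', and then identify the limit via uniqueness. That yields convergence in distribution in $D([0,T],L^2)\times L^2_{\mathrm{weak}}([0,T],H^1_0)$, which is strictly weaker than convergence in $\mathcal{D}=D([0,T],L^2)\cap L^2([0,T],H^1_0)$ required by Condition (C2). The paper has a non-trivial Step 3 which upgrades the convergence: after the limit $S^\prime$ is identified as the skeleton solution, one sets $L_k:=V^\prime_k-S^\prime$, applies It\^o's formula to $\|L_k\|_2^2$, controls each term (including the compensator difference using the $Q_2$-uniform estimate \eqref{eq estamite L}), and runs Gronwall on a short interval $[0,T_1]$ chosen so that $\int_{-T_1}^0|\gamma|\,dr=\tfrac12$ to conclude $\sup_t\|L_k(t)\|_2^2+\int_0^{T_1}\|L_k\|^2_{2,1}\,dt\to 0$ in probability, and then iterates in time. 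This bootstrapping from weak to strong convergence is essential for the LDP to hold in the stated topology on $\mathcal{D}$, and is entirely missing from your proposal.

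On the other hand, your handling of the compensator via the pointwise inequality $|x-1|^2\le 3\ell(x)$ and the dual pairing of $\ell$ with exponential functions is precisely the content of \cite[Lemma 3.4]{Budhiraja-Chen-Dupuis} that the paper invokes to obtain \eqref{eq estamite L}, so that part is sound; similarly, the paper asserts that Condition (C1) follows analogously to (C2), so your more explicit Aubin--Lions outline there is a reasonable elaboration rather than an error.
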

\begin{proof}
Define the space $\mathcal{C}:=C([0,T];H)\times M_{FC}([0,T]\times X)$. Using the correspondence \eqref{eq.corres-func-meas},
 we define a function $\mathcal{G}^0\colon \mathcal{C}\to \mathcal{D}$ such that
\begin{align*}
\mathcal{G}^0\left(\int_0^{\cdot} f(s)\,ds,\,\hat{g}\right)=u(f,g)
\qquad\text{for all }f\in S_1^m,\, g\in S_2^m,\,m\in\N,
\end{align*}
where $u(f,g)$ is the unique solution of \eqref{eq u q}.
Theorem \ref{thm solution} implies that for
each $\epsilon>0$  there exists a mapping $\mathcal{G}^\epsilon\colon \mathcal{C}\to \mathcal{D} $ such that
\begin{align*}
\mathcal{G}^\epsilon\big(\sqrt{\epsilon} W,\epsilon N^{\epsilon^{-1}}\big)
\stackrel{\mathcal{D}}{=} U_\epsilon,
\end{align*}
where $U_\epsilon$ is the solution of (\ref{eq G})
(and $\stackrel{\mathcal{D}}{=}$ denotes equality in distribution).

Define for each $m\in\N$ a space of stochastic processes on $\Omega$ by
\begin{align*}
\mathcal{S}_1^m:=\{\varphi\colon [0,T]\times
\Omega\to H:\, {\mathbb{F}}\text{-predictable and} \, \varphi(\cdot,\omega)\in S_1^m
\text{ for $P$-a.a. $\omega\in \Omega$}\}.
\end{align*}
Let $(K_n)_{n\in\N}$ be a sequence of compact sets $K_n\subseteq X$
with $ K_n \nearrow X$.  For each $n\in\N$, let
\begin{align*}
     \mathcal{R}_{b,n}
= \Big\{\psi\in \mathcal{R}:
\psi(t,x,\omega)\in \begin{cases}
 [\tfrac{1}{n},n], &\text{if }x\in K_n,\\
\{1\}, &\text{if }x\in K_n^c.
\end{cases}
\text{ for all }(t,\omega)\in [0,T]\times \Omega
\Big\},
\end{align*}
and let $\mathcal{R}_{b}=\bigcup _{n=1}^\infty \mathcal{R}_{b,n}$. Define
for each $m\in\N$ a space of stochastic process on $\Omega$ by
\begin{align*}
\mathcal{S}_2^m:=\{\psi\in  \mathcal{R}_{b}:\, \psi(\cdot,\cdot,\omega)\in S_2^m
\text{ for $P$-a.a. $\omega\in \Omega$}\}.
\end{align*}
According to Theorem 2.4 in \cite{Budhiraja-Chen-Dupuis} and Theorem 4.2 in
\cite{Budhiraja-Dupuis-Maroulas.}, our claim is established once we have proved:
\begin{enumerate}
\item[(C1)] if $(f_n)_{n\in\N}\subseteq S_1^m$ converges to $f\in S_1^m$
and $(g_n)_{n\in\N}\subseteq S_2^m$ converges to $g\in S_2^m$ for some $m\in\N$,
then
      $$
         \mathcal{G}^0\Big(\int_{0}^{\cdot}f_{n}(s)\,ds,\, \hat{g}_n\Big)\rightarrow \mathcal{G}^0\Big(\int_{0}^{\cdot}f(s)\, ds,
         \hat{g}\Big)\quad\text{in }\mathcal{D}.
      $$
\item[(C2)] if $(\phi_\epsilon)_{\epsilon>0}\subseteq \mathcal{S}_1^m$ converges weakly to $\phi\in \mathcal{S}_1^m$
and $(\psi_\epsilon)_{\epsilon> 0}\subseteq \mathcal{S}_2^m$ converges weakly to $\psi\in \mathcal{S}_2^m$ for some $m\in\N$,
then
    $$
         \mathcal{G}^\epsilon\Big(\sqrt{\epsilon} {W}+\int_{0}^{\cdot}\varphi_{\epsilon}(s)\, ds,\, \epsilon
          N^{\epsilon^{-1}\psi_\epsilon}\Big)\text{ converges weakly to }
         \mathcal{G}^0\Big(\int_{0}^{\cdot}\varphi(s)\,ds,\,\hat{\psi}\Big)\text{ in } \mathcal{D}.
      $$
\end{enumerate}
In the sequel, we will prove Condition (C2). The proof of Condition (C1)
follows analogously.
\end{proof}

\begin{lemma}
Let $(\phi_\epsilon)_{\epsilon>0}\subseteq \mathcal{S}_1^m$ and $(\psi_\epsilon)_{\epsilon> 0}\subseteq \mathcal{S}_2^m$ for some $m\in\N$. Then for each $\epsilon>0$ there exists a unique solution $V_\epsilon\in L^0\big(D([0,T],{L^2})\cap L^2([0,T],H^1_0 )\big)$ of
\begin{align}\label{eq hat u}
V_\epsilon(t)&= u_0+\int_{0}^t \Big(\Delta V_\epsilon(s) +
 {\rm div}\, B\big(s,V_\epsilon(s)\big)
  +  F\big(s,V_{\epsilon}(s)\big)+ \int_{-\infty}^0\gamma(r) \Delta V_\epsilon(s+r)\,dr \Big)\, ds\notag\\
&\qquad + \int_0^t G_1\big(s,V_\epsilon(s)\big) \varphi_\epsilon(s)\, ds + \sqrt{\epsilon}\int_0^t G_1\big(s, V_\epsilon(s)\big)\, dW(s)\\
&\qquad +\epsilon\int_0^t\int_{X}G_2\big(s,V_\epsilon(s-),x\big)\, \big(N^{\epsilon^{-1}\psi_\epsilon}(ds,dx)-\epsilon^{-1}\,ds\,\nu(dx)\big),\notag \\
V_\epsilon(s)&=\varrho(s)\text{ for }s<0.\notag
 \end{align}
Moreover, the solution $V_\epsilon$ has the same distribution  as
$\mathcal{G}^\epsilon\big(\sqrt{\epsilon} W+\int_0^{\cdot}\varphi_\epsilon(s)\,ds,\epsilon N^{\epsilon^{-1}\psi_\epsilon}\big)$.
\end{lemma}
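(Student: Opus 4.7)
The proof splits into two parts: existence and uniqueness of $V_\epsilon$, and the identification of its law via Girsanov's theorem.

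For well-posedness, rewrite the Poisson integral in \eqref{eq hat u} with respect to the genuine $P$-compensator $\epsilon^{-1}\psi_\epsilon(s,x)\,ds\,\nu(dx)$ of $N^{\epsilon^{-1}\psi_\epsilon}$; this extracts the absolutely continuous drift $\int_0^t\!\int_X G_2(s,V_\epsilon,x)(\psi_\epsilon(s,x)-1)\,\nu(dx)\,ds$. Combined with the control drift $\int_0^t G_1(s,V_\epsilon)\varphi_\epsilon(s)\,ds$, this casts \eqref{eq hat u} as an SPDE of the form \eqref{eq heat} driven by $(W,N^{\epsilon^{-1}\psi_\epsilon})$ with $F$ replaced by the random, time-dependent drift
$$\widehat F_\epsilon(s,v):=F(s,v)+G_1(s,v)\varphi_\epsilon(s)+\int_X G_2(s,v,x)(\psi_\epsilon(s,x)-1)\,\nu(dx).$$
The constraint $\int_0^T\!\|\varphi_\epsilon(s)\|_H^2\,ds\le 2m$ from $\varphi_\epsilon\in S_1^m$, and the uniform bounds $1/n\le\psi_\epsilon\le n$ on the $\nu$-finite set $K_n$ (with $\psi_\epsilon\equiv 1$ elsewhere) from $\psi_\epsilon\in\mathcal{R}_{b,n}$, together with (H3)-(H4), ensure that $\widehat F_\epsilon$ satisfies the monotonicity, hemicontinuity, coercivity and growth conditions of (H2) with ($\omega$-dependent) time-dependent coefficients in $L^1([0,T],\Rp)$. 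The cutoff--Picard iteration--energy estimate scheme of the proof of Theorem \ref{thm solution} then applies with only minor adjustments to accommodate the randomness of $\widehat F_\epsilon$.

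For the distributional identity, I would apply a combined Girsanov transformation. Define the exponential
\begin{align*}
\mathcal{E}_\epsilon(T) := & \exp\!\Bigl(-\tfrac{1}{\sqrt\epsilon}\!\int_0^T\!\langle\varphi_\epsilon(s),dW(s)\rangle-\tfrac{1}{2\epsilon}\!\int_0^T\!\|\varphi_\epsilon(s)\|_H^2\,ds\Bigr)\\
& \times \exp\!\Bigl(-\!\int_0^T\!\!\int_X\log\psi_\epsilon(s,x)\,dN^{\epsilon^{-1}\psi_\epsilon}(ds,dx)+\int_0^T\!\!\int_X(\psi_\epsilon(s,x)-1)\epsilon^{-1}\,\nu(dx)\,ds\Bigr).
\end{align*}
The uniform $L^2$-bound on $\varphi_\epsilon$ together with Novikov's criterion, and the uniform bounds on $\psi_\epsilon$ over the $\nu$-finite support set $K_n$ together with standard criteria for jump-type exponentials (e.g.\ L\'epingle--M\'emin), guarantee that $\mathcal{E}_\epsilon$ is a true uniformly integrable $P$-martingale. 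Setting $d\tilde P:=\mathcal{E}_\epsilon(T)\,dP$, the generalized Girsanov theorem yields that under $\tilde P$ the process $W^*(t):=W(t)+\epsilon^{-1/2}\!\int_0^t\varphi_\epsilon(s)\,ds$ is a standard cylindrical Brownian motion, and $N^{\epsilon^{-1}\psi_\epsilon}$ is an independent Poisson random measure with intensity $\epsilon^{-1}\,ds\otimes\nu$.

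Substituting $\sqrt\epsilon W=\sqrt\epsilon W^*-\int_0^{\cdot}\varphi_\epsilon(s)\,ds$ into \eqref{eq hat u} cancels the $G_1\varphi_\epsilon$ drift, and rewriting the jump integral relative to the $\tilde P$-compensator $\epsilon^{-1}\,\nu(dx)\,ds$ (rather than the $P$-compensator $\epsilon^{-1}\psi_\epsilon\,\nu(dx)\,ds$) shows that under $\tilde P$, $V_\epsilon$ satisfies the original equation \eqref{eq G} driven by $(W^*,N^{\epsilon^{-1}\psi_\epsilon})$. Since this input pair has under $\tilde P$ exactly the same joint law as $(W,N^{\epsilon^{-1}})$ has under $P$, the pathwise solution map $\mathcal{G}^\epsilon$ furnished by the pathwise uniqueness in Theorem \ref{thm solution} yields $V_\epsilon=\mathcal{G}^\epsilon(\sqrt\epsilon W^*,\epsilon N^{\epsilon^{-1}\psi_\epsilon})$ $\tilde P$-a.s., hence also $P$-a.s. since $\tilde P\sim P$; noting $\sqrt\epsilon W^*=\sqrt\epsilon W+\int_0^{\cdot}\varphi_\epsilon(s)\,ds$ gives the asserted equality in distribution (in fact even $P$-a.s.\ equality). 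The principal technical obstacle is verifying the genuine martingale property of $\mathcal{E}_\epsilon$ under the precise definitions of $S_1^m$ and $\mathcal{R}_{b,n}$; the remaining substitutions and the uniqueness argument are routine.
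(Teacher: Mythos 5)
Your proposal is correct and is essentially the argument the paper invokes: the paper's proof consists only of a pointer to p.~543 of \cite{Budhiraja-Chen-Dupuis} and to Section A.2 of \cite{Budhiraja-Dupuis-Maroulas.}, and those references carry out precisely the Girsanov change of measure (combined Brownian and Poisson exponential martingale) together with the identification of the pathwise solution map $\mathcal{G}^\epsilon$ that you spell out. The one step you pass over lightly---that the strong solution map from Theorem~\ref{thm solution} can be promoted to a Borel map $\mathcal{G}^\epsilon$ on path space which, applied to the shifted/controlled noise pair, returns $V_\epsilon$ as a predictable functional---is exactly the content of the cited Appendix~A.2 of \cite{Budhiraja-Dupuis-Maroulas.} and deserves explicit citation rather than the remark that it is ``routine''; otherwise the decomposition of the drift, the verification that $\widehat F_\epsilon$ keeps $L^1$-in-time coefficients thanks to $\varphi_\epsilon\in S_1^m$ and $\psi_\epsilon\in\mathcal{R}_{b,n}$, and the form of $\mathcal{E}_\epsilon$ are all as in the references.
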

\begin{proof}
The proof can be accomplished by following \cite[p.543]{Budhiraja-Chen-Dupuis}
and applying results in \cite[Se.A.2]{Budhiraja-Dupuis-Maroulas.}.
\end{proof}

\vskip 0.2cm

\begin{lemma}\label{lemma3.1}
For each $m\in\N$ there exists a constant $c_m>0$ such that for any $\epsilon\in(0,1)$:
\begin{align*}
 E\left (\sup_{t\in[0,T]}\norm{V_\epsilon(t)}_2^2 + \int_0^T\norm{V_\epsilon(t)}_{2,1 }^2\,dt + \int_0^T\norm{V_\epsilon(t)}_q^q\,dt \right)
 \leq c_m.
\end{align*}
\end{lemma}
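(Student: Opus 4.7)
The plan is to mimic Step 1 of the proof of Theorem \ref{thm solution} by applying It\^o's formula to $\|V_\epsilon(t)\|_2^2$ and estimating every resulting term, paying special attention to the new ingredients: the control drift $G_1(s,V_\epsilon)\varphi_\epsilon$, the Poisson compensator correction coming from $\psi_\epsilon-1$, and the scaling factors $\sqrt\epsilon,\epsilon$. The contributions from the Laplacian, $F$, $\mathrm{div}\,B$ and the memory kernel are treated exactly as in \eqref{eq.aux1.P4}--\eqref{eq.aux2}, producing the dissipation $\int_0^t\|V_\epsilon\|_{2,1}^2\,ds + c_4\int_0^t\|V_\epsilon\|_q^q\,ds$ on the left-hand side and right-hand side terms of the form $\int_0^t h(s)(1+\|V_\epsilon(s)\|_2^2)\,ds$ with $h\in L^1([0,T])$.

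For the genuinely new control drift I would apply Cauchy--Schwartz, Young's inequality and (H3) to obtain
\begin{align*}
2|\langle V_\epsilon, G_1(s,V_\epsilon)\varphi_\epsilon\rangle|
\leq \|V_\epsilon\|_2^2\,\|\varphi_\epsilon(s)\|_H^2 + h_4(s)\bigl(1+\|V_\epsilon\|_2^2\bigr),
\end{align*}
and to notice that $h_4+\|\varphi_\epsilon(\cdot)\|_H^2\in L^1([0,T])$ uniformly in $\epsilon$ and $\omega$ since $Q_1(\varphi_\epsilon)\leq m$. For the Poisson compensator correction $2\int_0^t\!\int_X\langle V_\epsilon,G_2\rangle(\psi_\epsilon-1)\,\nu(dx)\,ds$, (H4) combined with $\|V\|\leq \tfrac12(1+\|V\|^2)$ gives $|\langle V_\epsilon,G_2\rangle|\leq h_6(s,x)(\tfrac12+\tfrac32\|V_\epsilon\|_2^2)$. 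The key fact from \cite[Lemma 3.4]{Budhiraja-Chen-Dupuis}, exploiting $h_6\in\mathcal H$, supplies a bound $\int_0^T\!\int_X h_6|\psi_\epsilon-1|\,\nu(dx)\,ds\leq C_m$ almost surely, uniformly in $\psi_\epsilon\in S_2^m$, so this correction is controlled by $C_m + \int_0^t K_\epsilon(s)\|V_\epsilon(s)\|_2^2\,ds$ where $\int_0^T K_\epsilon\leq 3C_m$ a.s.

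The stochastic integrals and quadratic variation corrections are treated as in \eqref{eq.kappa1}--\eqref{eq.comp-Poisson}. The Brownian integral produces via Burkholder and Young the bound $\kappa_1 E Z_t + c_{\kappa_1} E\int h_4(1+\|V_\epsilon\|_2^2)\,ds$ with $Z_t:=\sup_{s\leq t}\|V_\epsilon(s)\|_2^2$, taking advantage of the prefactor $\sqrt\epsilon\leq 1$. For the compensated Poisson integral, Burkholder gives
\begin{align*}
E\!\sup_{t'\leq t}\Bigl|\epsilon\int_0^{t'}\!\!\int_X \langle V_\epsilon,G_2\rangle\,d\widetilde N^{\epsilon^{-1}\psi_\epsilon}\Bigr|
\leq C\sqrt\epsilon\,\Bigl(E\int_0^t\!\!\int_X\langle V_\epsilon,G_2\rangle^2\psi_\epsilon\,\nu(dx)\,ds\Bigr)^{1/2},
\end{align*}
and a further use of (H4) together with the analogous BCD-type bound $\int_0^T\!\int_X h_6^2\psi_\epsilon\,\nu(dx)\,ds\leq C_m$ a.s.\ majorises the right-hand side by $C\sqrt{\epsilon C_m\,E(Z_t+Z_t^2)}$; the same treatment applies to the remaining corrections $\epsilon^2\int\|G_2\|_2^2\,d\widetilde N^{\epsilon^{-1}\psi_\epsilon}$, $\epsilon\int\|G_1\|_{\mathcal L_2}^2\,ds$ and $\epsilon\int\int\|G_2\|_2^2\psi_\epsilon\,\nu(dx)\,ds$.

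The hard part will be closing the Gronwall inequality uniformly in $\epsilon\in(0,1)$, because the Burkholder bound on the Poisson integral inevitably introduces $(EZ_t^2)^{1/2}$ (the square $\langle V_\epsilon,G_2\rangle^2$ grows like $h_6^2(1+\|V_\epsilon\|_2^2)^2$ by (H4)), and the a priori finiteness of $EZ_t^2$ is not available. To handle this I would introduce the localising stopping time $\tau_R:=\inf\{t\in[0,T]:\|V_\epsilon(t)\|_2^2\geq R\}\wedge T$, apply the preceding estimates to the stopped process $V_\epsilon(\cdot\wedge\tau_R)$ (the jump of $\|V_\epsilon\|_2^2$ at $\tau_R$ being controlled by (H4) via $\epsilon h_6(1+\sqrt R)$, hence integrable), conclude by Gronwall that $EZ_{t\wedge\tau_R}\leq c_m$ with $c_m$ independent of $R$ and $\epsilon$, and finally let $R\to\infty$ using Fatou's lemma. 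The bounds on $E\int_0^T\|V_\epsilon\|_{2,1}^2\,dt$ and $E\int_0^T\|V_\epsilon\|_q^q\,dt$ emerge from the same inequality once the bound on $EZ_T$ is established.
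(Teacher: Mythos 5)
Your overall plan is the same as the paper's (extremely terse) proof: invoke Lemma~3.4 of Budhiraja--Chen--Dupuis to obtain \eqref{eq estamite L}, and then repeat the argument of Step~1 of Theorem~\ref{thm solution} with the extra control drift, the compensator correction, and the $\sqrt\epsilon,\epsilon$ prefactors. Your treatment of the deterministic terms (Laplacian, $F$, $\mathrm{div}\,B$, memory), of the new drift $G_1(\cdot,V_\epsilon)\varphi_\epsilon$, and of the compensator correction $\int\!\int\scapro{V_\epsilon}{G_2}(\psi_\epsilon-1)\,\nu\,ds$ via \eqref{eq estamite L} are all correct and aligned with the paper.

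There is, however, a genuine gap in your handling of the compensated Poisson integral. After BDG you apply the Cauchy--Schwarz inequality $E(X^{1/2})\le(EX)^{1/2}$ too early, which turns the optional quadratic variation into $\sqrt\epsilon\,(E\int\!\int\scapro{V_\epsilon}{G_2}^2\psi_\epsilon\,\nu\,ds)^{1/2}$ and then, via (H4), into $C\sqrt{\epsilon C_m E(Z_t+Z_t^2)}$. The $Z_t^2$ term is spurious: the paper's own estimate \eqref{eq.kappa2} avoids it by keeping the optional quadratic variation under the square root \emph{inside} the expectation, bounding $\scapro{V_\epsilon}{G_2}^2\le\bigl(\sup_s\norm{V_\epsilon(s)}_2^2\bigr)\norm{G_2}_2^2$, and only then applying Young's inequality $ab\le\kappa a^2+c_\kappa b^2$ to $Z_T^{1/2}\bigl(\epsilon^2\int\!\int\norm{G_2}_2^2\,N^{\epsilon^{-1}\psi_\epsilon}\bigr)^{1/2}$, which after compensating gives $\kappa E Z_T+c_\kappa\,\epsilon\,E\int\!\int h_6^2(1+\norm{V_\epsilon}_2)^2\psi_\epsilon\,\nu\,ds\le\kappa EZ_T+2c_\kappa\epsilon\, c_m(1+EZ_T)$. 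This involves $EZ_T$ only, and Gronwall closes directly.

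Your proposed localisation does not repair the problem as stated: if $Z_{t\wedge\tau_R}^2\le\bigl(R+\text{jump}\bigr)Z_{t\wedge\tau_R}$, the resulting Gronwall estimate carries a factor depending on $R$, so letting $R\to\infty$ does not recover a bound independent of $R$ (and controlling $EZ_{t\wedge\tau_R}^2$ itself would require $L^4$-moments of the jump that (H4) does not supply). Localisation is fine if one merely wants the \emph{a priori} finiteness $EZ_{t\wedge\tau_R}<\infty$ needed to justify Gronwall, but the quadratic term has to be eliminated at the BDG step, not absorbed afterwards. With that reordering your argument matches the paper.
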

\begin{proof}
Lemma 3.4 in \cite{Budhiraja-Chen-Dupuis} guarantees that for each $m\in\N$  there exists a constant $c_m>0$ such that
\begin{align}\label{eq estamite L}
\sup_{g \in S_2^m}\int_0^T\! \int_{X}\hspace*{-.2cm} h_i^2(s,x)g(s,x)\,\nu(dx)\,ds
+
\sup_{g\in S_2^m}\int_0^T\! \int_{X} \hspace*{-.2cm}  h_i(s,x)|g(s,x)-1|\,\nu(dx)\,ds
\leq c_m,
\end{align}
 for $i=5,6$. Using this inequality, the proof can be accomplished as the proof of \eqref{eq u mn}. 
\end{proof}

For each $\epsilon \ge 0$ let $Y_\epsilon$ be the unique solution of the SPDE
\begin{align*}
 Y_\epsilon(t)&=\int_0^t \Delta Y_\epsilon(s)\,ds + \sqrt{\epsilon}\int_0^t G_1(s, V_\epsilon(s))\,dW(s)\\ 
 &\qquad \qquad +\epsilon \int_0^t\int_{X}G_2(s,V_\epsilon(s-),x)\, \widetilde{N}^{\epsilon^{-1}\psi_\epsilon}(dx,ds)\quad \text{for all }t\in [0,T], 
\intertext{ and let $Z_\epsilon$ be the unique solution of the random PDE}
 Z_\epsilon(t)&=\int_0^t\Delta Z_\epsilon(s)\,ds + \int_0^t\int_{X}G_2(s,V_\epsilon(s),x)(\psi_{\epsilon}(s,x)-1)\,\nu(dx)\,ds \quad \text{for all }t\in [0,T].
\end{align*}
Furthermore, we define the difference
\begin{align*}
 J_\epsilon(t)=V_\epsilon(t)-Y_\epsilon(t)-Z_\epsilon(t)
 \qquad\text{for all }t\in [0,T].
\end{align*}
\begin{lemma}\label{lemma compact u}
For each $m\in\N$ there exists a constant $c_{m}$ independent of $\epsilon$ such that
\begin{itemize}
\item[{\rm (a)}] $Y_\epsilon$ obeys
$$
E\Big(\sup_{t\in[0,T]}\norm{Y_\epsilon(t)}_2^2\Big)+E\left(\int_0^T\norm{Y_\epsilon(t)}^2_{2,1}\,dt\right) \leq \epsilon\, c_{m};
$$
\item[{\rm (b)}] $\{Z_\epsilon, \epsilon \in (0,1)\}$ is tight in $C([0,T],L^2)$ and obeys
$$
E\Big(\sup_{t\in[0,T]}\norm{Z_\epsilon(t)}_2^2\Big)+E\left(\int_0^T\norm{Z_\epsilon(t)}^2_{2,1}\,dt\right) \leq c_{m};
$$
\item[{\rm (c)}] $\{J_\epsilon, \epsilon\in (0,1)\}$ is tight in $C([0,T],\mathbb{H}^{-d})\cap L^{2}([0,T],L^2)$
and obeys
\begin{align*}
E\Big(\sup_{t\in[0,T]}\norm{J_\epsilon(t)}^2_2\Big)+E\left(\int_0^T\norm{J_\epsilon(t)}^2_{2,1}\,dt\right) \leq c_{m}.
\end{align*}
\end{itemize}
\end{lemma}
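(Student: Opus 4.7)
I would apply It\^o's formula to $\|Y_\epsilon(t)\|_2^2$ using the defining SPDE for $Y_\epsilon$. Since the Brownian and compensated Poisson integrals carry prefactors $\sqrt\epsilon$ and $\epsilon$ respectively, taking $E\sup_{t\in[0,T]}$ and applying Burkholder--Davis--Gundy together with Young's inequality exactly as in Step~1 of the proof of Theorem~\ref{thm solution} produces on the right-hand side an overall factor of $\epsilon$. The jump contribution, after compensation under the intensity $\epsilon^{-1}\psi_\epsilon\,\nu\otimes ds$, reduces to $\epsilon\int_0^T\!\!\int_X\|G_2(s,V_\epsilon,x)\|_2^2\,\psi_\epsilon(s,x)\,\nu(dx)\,ds$, which is dominated by assumption (H4), Lemma~\ref{lemma3.1} and \eqref{eq estamite L} with $i=6$. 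Gronwall's lemma then delivers the bound of order $\epsilon c_m$.

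\textbf{Part (b).} The random PDE for $Z_\epsilon$ is linear in $Z_\epsilon$ with pathwise forcing $f_\epsilon(s):=\int_X G_2(s,V_\epsilon(s),x)(\psi_\epsilon(s,x)-1)\,\nu(dx)$, which by (H4) satisfies $\|f_\epsilon(s)\|_2\le(1+\|V_\epsilon(s)\|_2)\int_X h_6(s,x)|\psi_\epsilon(s,x)-1|\,\nu(dx)$ and, by \eqref{eq estamite L} and Lemma~\ref{lemma3.1}, is therefore uniformly bounded in $L^1(\Omega;L^1([0,T],L^2))$. A direct energy estimate on $\|Z_\epsilon(t)\|_2^2$ then yields the asserted moment bound. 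For tightness in $C([0,T],L^2)$, I would rewrite $Z_\epsilon$ in mild form as $Z_\epsilon(t)=\int_0^t e^{(t-s)\Delta}f_\epsilon(s)\,ds$: since the domain $\mathcal{O}$ is bounded, $e^{t\Delta}$ is a compact $C_0$-semigroup on $L^2$, so the convolution operator $L^1([0,T],L^2)\to C([0,T],L^2)$ is compact. A Markov-inequality argument applied to the uniform bound on $\int_0^T\|f_\epsilon(s)\|_2\,ds$ then places $Z_\epsilon$ into a compact subset of $C([0,T],L^2)$ with arbitrarily high probability.

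\textbf{Part (c).} Subtracting the defining equations of $V_\epsilon$, $Y_\epsilon$, $Z_\epsilon$, the $\sqrt\epsilon$ Brownian integrals cancel, and the identity $\epsilon(N^{\epsilon^{-1}\psi_\epsilon}-\epsilon^{-1}\nu\,ds)=\epsilon\widetilde N^{\epsilon^{-1}\psi_\epsilon}+(\psi_\epsilon-1)\nu\,ds$ shows that the Poisson contribution in $V_\epsilon$ cancels against the Poisson term of $Y_\epsilon$ plus the drift of $Z_\epsilon$. Hence $J_\epsilon$ satisfies the pathwise PDE
\begin{align*}
J_\epsilon(t)=u_0+\int_0^t\!\Big(\Delta J_\epsilon+\div B(s,V_\epsilon)+F(s,V_\epsilon)+\int_{-\infty}^0\!\gamma(r)\Delta V_\epsilon(s+r)\,dr+G_1(s,V_\epsilon)\varphi_\epsilon\Big)ds.
\end{align*}
An energy estimate, handling the nonlinearities in the manner of Step~1 of the proof of Theorem~\ref{thm solution} (coercivity \eqref{eq.scapro-g}, growth of $F$ combined with the $L^q$-bound on $V_\epsilon$ from Lemma~\ref{lemma3.1}, and the smallness $2\delta_{T_0}^2=\tfrac12$ controlling the memory term), gives the moment bound in $L^\infty_tL^2_x\cap L^2_tH^1_0$. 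Reading off $\partial_tJ_\epsilon$ from the equation and using the embeddings $H^{-1},L^{q^*}\hookrightarrow H^{-d}$, the time derivative is uniformly bounded in $L^r([0,T],H^{-d})$ for some $r>1$. Aubin--Lions--Simon via the compact embedding $H^1_0\Subset L^2$ then yields tightness in $L^2([0,T],L^2)$, while the $L^r_tH^{-d}$-bound on $\partial_tJ_\epsilon$ produces equi-H\"older continuity in $H^{-d}$ and hence tightness in $C([0,T],H^{-d})$.

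\textbf{Main obstacle.} The most delicate point will be the non-Lipschitz nonlinearity $F(V_\epsilon)$ in the energy estimate for $J_\epsilon$: only the coercivity and growth of (H2) are available, so $F(V_\epsilon)$ must be paired against $J_\epsilon$ in the $L^q$--$L^{q^*}$ duality and absorbed either into the parabolic coercivity or into a Gronwall loop using the uniform $L^q$-bound on $V_\epsilon$. As in Theorem~\ref{thm solution}, the memory term forces the estimate to be carried out on successive intervals of length $T_0$ with $2\delta_{T_0}^2=\tfrac12$, and the global result follows by iteration.
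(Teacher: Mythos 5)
Your Part (a) matches the paper's argument. The issues are in (b) and (c).

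In Part (b), the tightness argument rests on the assertion that the convolution operator $f\mapsto\int_0^{\cdot}e^{(\cdot-s)\Delta}f(s)\,ds$ is compact from $L^1([0,T],L^2)$ to $C([0,T],L^2)$. This is false: taking $f_n(s)=n\,\1_{(0,1/n]}(s)\,g$ with $g\in L^2\setminus\{0\}$ gives a bounded family in $L^1$ whose mild solutions $Z_n$ converge pointwise to $e^{t\Delta}g$ for $t>0$ while $Z_n(0)=0$, so no subsequence converges uniformly. An $L^1$-in-time bound on the forcing does not give equicontinuity; one needs uniform integrability in time. This is precisely what the entropy constraint built into $S_2^m$ (together with $h_6\in\mathcal H$) supplies, and the paper obtains the compactness by defining the specific set $A_{M,m}$ of forcings with $g\in\mathcal S_2^m$ and appealing to Proposition~3.9 of \cite{YZZ}, which is tailored to that structure. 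You should replace the false general claim by an invocation of that result, or reproduce its proof using \eqref{eq estamite L} and the absolute-continuity estimate of \cite[Le.3.4]{Budhiraja-Chen-Dupuis} to get equicontinuity.

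In Part (c), the moment bound does not require a fresh energy estimate on the pathwise PDE for $J_\epsilon$: since $J_\epsilon=V_\epsilon-Y_\epsilon-Z_\epsilon$, the estimates \eqref{eq.E-Y_k}-type bounds from Parts (a), (b) together with Lemma~\ref{lemma3.1} already give $E(\sup_t\|J_\epsilon(t)\|_2^2)+E(\int_0^T\|J_\epsilon\|_{2,1}^2\,dt)\le c_m$ by the triangle inequality — which is what the paper does. The direct energy estimate you outline actually runs into the obstacle you flag at the end: in $\langle J_\epsilon,F(s,V_\epsilon)\rangle$ the coercivity \eqref{eq.scapro-g} is unavailable because it only applies to the diagonal pairing $\langle v,F(v)\rangle$, and the growth bound on $F$ would require control of $\|J_\epsilon\|_{L^q}$ which the $J_\epsilon$-equation alone does not provide (there is no $F(J_\epsilon)$ term to generate $L^q$-coercivity). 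So the proposed route for the moment bound is not just harder — it does not close as written. For the H\"older/tightness part, your Aubin--Lions idea is in the same spirit as the paper's appeal to \cite[Le.4.3]{LXZ} and \cite[Le.4.3]{GRZ}, but note that the term $G_1(s,V_\epsilon(s))\varphi_\epsilon(s)$ is a priori only $L^1$ in time (since $h_4\in L^1$), so the claim $\partial_t J_\epsilon\in L^r_tH^{-d}$ with $r>1$ needs justification; the paper instead proves the H\"older bound \eqref{eq star 1} directly.
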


\begin{proof} Part (a). It{\^o}'s formula implies 
\begin{align*}
   &\norm{Y_\epsilon(t)}_2^2+2\int_0^t\norm{Y_\epsilon(s)}^2_{2,1}\,ds\nonumber\\
&\qquad =
   2\sqrt{\epsilon}\int_0^t\langle G_1(s,V_\epsilon(s)), Y^\epsilon(s)\rangle \, dW(s)\\
&\qquad \qquad    +
   2\epsilon\int_0^t\int_{X}\langle G_2(s,V_\epsilon(s-),x), Y_\epsilon(s-)\rangle \,\widetilde{N}^{\epsilon^{-1}\psi_\epsilon}(dx,ds)\nonumber\\
   &\qquad\qquad +
   \epsilon\int_0^t \norm{G_1(s,V_\epsilon(s))}_{\mathcal{L}_2}^2\,ds
   +
   \epsilon^2\int_0^t\int_{X}\norm{G_2(s, V_\epsilon(s-),x)}^2_2 \,  N^{\epsilon^{-1}\psi_\epsilon}(dx,ds)\nonumber\\
   &\qquad =:
   I_1(t)+I_2(t)+I_3(t)+I_4(t).
\end{align*}
It follows from  \eqref{eq.sigma_1-growth} by Burkholder's inequality and Young's inequality that for each 
$\kappa_1\in (0,1)$ there exists a constant $c_{\kappa_1}>0$ such that 
\begin{align*}
   E\Big(\sup_{t\in[0,T]}\Big|I_1(t)\Big|\Big)
&\leq
   \kappa_1 E\Big(\sup_{t\in[0,T]}\norm{Y_\epsilon(t)}^2_2\Big)
   +
   \epsilon\, c_{\kappa_1}  E\Big(1+\sup_{t\in[0,T]}\norm{V_\epsilon(t)}_2^2\Big).
\end{align*}
Analogously, one obtains from \eqref{eq.sigma_2-growth} that for each $\kappa_2\in (0,1)$ there exists a constant $c_{\kappa_2}>0$ such that
\begin{align*}
  E\Big(\sup_{t\in[0,T]}\Big|I_2(t)\Big|\Big)
& \leq
  \kappa_2 E\Big(\sup_{t\in[0,T]}\norm{Y_\epsilon(t)}^2_2\Big)\\
  &\qquad +
  \epsilon\, c_{\kappa_2}  E\Big(1+\sup_{t\in[0,T]}\norm{V_\epsilon(t)}^2_2\Big)\cdot\Big(\sup_{g\in {S_2^m}}\int_{0}^T\int_{X}h^2_6(s,x)g(s,x)\,\nu(dx)\,ds\Big).
\end{align*}
Inequality \eqref{eq.sigma_1-growth} yields that there exits a constant $c_3>0$ such that 
\begin{align*}
 E\Big(\sup_{t\in[0,T]}|I_3(t)|\Big)
 \leq
 \epsilon\, c_3 E\Big(1+\sup_{t\in[0,T]}\norm{V_\epsilon(t)}^2_2\Big). 
\end{align*}
From inequality \eqref{eq.sigma_2-growth} we conclude that there  exits a constant $c_4>0$ such that 
\begin{align*}
 E\Big(\sup_{t\in[0,T]}|I_4(t)|\Big)
& \leq
 \epsilon E\Big(\int_0^T\int_{X}\norm{G_2(s,V_\epsilon(s),x)}^2_2\psi_\epsilon(s,x)\,\nu(dx)\,ds\Big)\nonumber\\
& \leq
  \epsilon c_4 E\Big(1+\sup_{t\in[0,T]}\norm{V_\epsilon(t)}_2^2\Big)\cdot \Big( \sup_{\chi\in {\mathcal{S}_2^m}}\int_0^T\int_{X}h^2_6(s,x)\chi(s,x)\,\nu(dx)\,ds\Big).
\end{align*}
Combining all of the above estimates and applying Gronwall's inequality together with 
Lemma \ref{lemma3.1} and \eqref{eq estamite L} completes the proof of part (a). 

\noindent Part (b).
By the chain rule we conclude from \eqref{eq.sigma_2-growth} and \eqref{eq estamite L} that there exists 
a constant $c>0$ with
\begin{align*}
  &\norm{Z_\epsilon(t)}^2_2+2\int_0^t\norm{Z_\epsilon(s)}^2_{2,1}\,ds\nonumber\\
  &\quad =
  \int_0^t\int_{X}\scaprob{G_2(s,V_\epsilon(s),x)(\psi_\epsilon(s,x)-1)}{Z_\epsilon(s)} \,\nu(dx)\,ds\nonumber\\
  &\quad \leq
  \int_0^t\int_{X}h_6(s,x)\big|\psi_\epsilon(s,x)-1\big|\big(1+\norm{V_\epsilon(s)}_2\big)\norm{Z_\epsilon(s)}_2\,\nu(dx)\,ds\nonumber\\
  &\quad \leq
 \left(\sup_{s\in[0,T]}\norm{Z_\epsilon(s)}_2\right) \left( \sup_{s\in[0,T]}(1+\norm{V_\epsilon(s)}_2)\right) \left(\sup_{g\in S_2^m}\int_0^T\int_{X} h_6(s,x)|g(s,x)-1|\,\nu(dx)\,ds\right)\nonumber\\
  &\quad \leq
  \tfrac{1}{2} \left(\sup_{s\in[0,T]}\norm{Z^\epsilon(s)}^2_2\right)
  +c\left(1+\sup_{s\in[0,T]}\norm{V_\epsilon(s)}^2_2\right),
\end{align*}
which completes the proof of the second claim due to Lemma \ref{lemma3.1}. 
Define for each  $M\ge 1$ and $m\in\mathbb{N}$ the set of functions
\begin{align*}
A_{M,m}&:=\bigg\{ \int_{X}G_2(\cdot, h(\cdot), x)(g(s,x)-1)\,\nu(dx):\\
& \qquad\qquad h\in D([0,T],L^2),\, \sup_{s\in[0,T]}\|h(s)\|^2_2\leq M,\, g\in {\mathcal{S}_2^m}\bigg\}.
\end{align*}
Proposition 3.9 of \cite{YZZ} guarantees that the set
\begin{align*}
 B_{M,m}=\left\{\int_0^\cdot e^{(\cdot-s)\Delta}f(s)\,ds,\ \ f\in A_{M,m} \right\}
\end{align*}
is relatively compact in $C([0,T],L^2)$. From Lemma \ref{lemma3.1} we conclude that 
\begin{align*}
P\Big(Z^\epsilon \in B_{M,m}\Big)
  \geq
  P\Big(\sup_{t\in[0,T]}\norm{V_\epsilon(t)}^2_2\leq M\Big)
  \geq
  1-\frac{1}{M} E\Big(\sup_{t\in[0,T]}\norm{V_\epsilon(t)}^2_2\Big)
  &\geq
  1-\frac{c_{m}}{M},
\end{align*}
which completes the proof of part (b). 

\noindent Part (c). 
The very definition of $J_\epsilon$ yields
\begin{align*}
  J_\epsilon(t)
=
 u_0& +\int_0^t \left( \Delta J_\epsilon(s) + {\rm div} B(s,V_\epsilon(s))
 + F(s,V_\epsilon(s))+ \int_{-\infty}^0 \gamma(r)\Delta V_\epsilon(s+r)\,dr  \right)\,ds\nonumber\\
 &+ \int_0^t G_1(s,V_\epsilon(s))  \varphi_\epsilon(s)\,ds . 
\end{align*}
By combining Part (a), Part (b) and Lemma \ref{lemma3.1}  we  
conclude that for each $m\in\N$ there exists a constant $c_m>0$ such that 
\begin{align}\label{eq estamite J}
E\Big(\sup_{t\in[0,T]}\norm{J_\epsilon(t)}^2_2\Big)+E\Big(\int_0^T\norm{J_\epsilon(t)}^2_{2,1}\,dt\Big) \leq c_m.
\end{align}
Following the arguments in the proof of Lemma 4.3 in \cite{LXZ} by using \eqref{eq estamite J} and Lemma \ref{lemma3.1} implies that  there exist $\alpha\in(0,1)$  such that
for all $\epsilon\in(0,1)$ we have
\begin{align}\label{eq star 1}
E\bigg[\sup_{ \substack{ s,t\in[0,T]\\ s\neq t}  }\frac{\norm{J_\epsilon(s)-J_\epsilon(t)}_{2,-d}}{|s-t|^\alpha}\bigg]<\infty.
\end{align}
According to Lemma 4.3 in \cite{GRZ}, 
the estimates \eqref{eq estamite J} and \eqref{eq star 1} imply that
$\{J_\epsilon, \epsilon \in (0,1)\}$ is tight in $C([0,T],H ^{-d})\cap L^{2}([0,T],L^2)$,
which completes the proof of part (c).
\end{proof}

\noindent
\textbf{{\bf Proof of Condition (C2)}}.\\
{\bf Step 1:}
We fix a sequence  $(\phi_\epsilon)_{\epsilon>0}\subseteq \mathcal{S}_1^m$ converging weakly to $\phi\in \mathcal{S}_1^m$  and $(\psi_\epsilon)_{\epsilon> 0}\subseteq \mathcal{S}_2^m$ converging weakly to $\psi\in \mathcal{S}_2^m$ for some $m\in\N$.   We conclude from Lemma  \ref{lemma compact u} that
$\big\{\big(Y_\epsilon, Z_\epsilon, J_\epsilon, \varphi_\epsilon,\psi_\epsilon, W, N\big),\, \epsilon\in(0,1)\big\}$  is  tight in the space
\begin{align*}
\Pi& :=\Big(
   D\big([0,T],{L^2}\big)\cap L^2\big([0,T],{H^1_0 }\big)\Big)\times C\big([0,T],{L^2}\big)\times \Big(C\big([0,T],H^{-d}\big)\cap L^{2}\big([0,T],{L^2}\big)\Big)\\
 &\qquad\qquad  \times \mathcal{S}_1^m\times \mathcal{S}_2^m\times  C\big([0,T],H\big)\times  M_{FC}\big([0,T]\times X
  \times [0,\infty)\big).
\end{align*}
Skorohod embedding theorem implies for any sequence  $(\epsilon_k)_{k\in\N} \subseteq\Rp$ converging to $0$, that there exist a probability space
$(\Omega^\prime, \mathcal{F}^\prime,  P^\prime)$, a sequence $\big\{\big(Y^\prime_k,Z^\prime_k,J^\prime_k, \varphi^\prime_k, \psi^\prime_k, W^\prime_k, N^\prime_k\big),$  $k\in\mathbb{N}\big\}$
as well as $\big(0, Z^\prime, J^\prime, \varphi^\prime,\psi^\prime, W^\prime, N^\prime\big)$ defined on this probability space and taking values in $\Pi$
such that:
\begin{align}
\big(Y^\prime_k,Z^\prime_k,J^\prime_k, \varphi^\prime_k, \psi^\prime_k, W^\prime_k, N^\prime_k\big)&\stackrel{\mathcal{D}}{=}(Y_{\epsilon_k}, Z_{\epsilon_k}, J_{\epsilon_k},  \varphi_k, \psi_k,  W, N)
\text{ for each $k\in\N$;}  \label{eq.equal-in-D}\\[.2cm]
\lim_{k\to\infty} \big(Y^\prime_k,Z^\prime_k,J^\prime_k, \varphi^\prime_k, \psi^\prime_k, W^\prime_k, N^\prime_k\big)&= \big(0, Z^\prime, J^\prime, \varphi^\prime,\psi^\prime, W^\prime, N^\prime\big)\quad\text{$P^\prime$-a.s.  in $\Pi$.}
  \label{eq.P-conv}
\end{align}
\begin{itemize}[wide, labelwidth=!, labelindent=0pt]
\item[(a)]
Lemma \ref{lemma compact u} together with \eqref{eq.equal-in-D} imply that
there exists a constant $c>0$ such that
\begin{align}
& E^\prime\Big(\sup_{t\in[0,T]}\|Y^\prime_k(t)\|^2_2\Big)
  + E^\prime\Big(\int_0^T\|Y^\prime_k(t)\|^2_{2,1}\,dt\Big)
 \leq  \epsilon_k c;\label{eq.E-Y_k}\\
& E^\prime\Big(\sup_{t\in[0,T]}\|Z^\prime_k(t)\|^2_2\Big)
   +E^\prime  \Big(\int_0^T\| Z^\prime_k(t)\|^2_{2,1}\,dt\Big)
  \leq  c;\label{eq.E-Z_k}\\
& E^\prime\Big(\sup_{t\in[0,T]}\|J^\prime_k(t)\|^2_2\Big)
  +E^\prime\Big(\int_0^T\|J^\prime_k(t)\|^2_{2,1}\,dt\Big)
 \leq c.  \label{eq.E-J_k}
\end{align}
Consequently, we conclude from \eqref{eq.P-conv} that
\begin{align}
& E^\prime\Big(\sup_{t\in[0,T]}\|\Z^\prime(t)\|^2_2\Big)
  + E^\prime\Big(\int_0^T\|Z^\prime(t)\|^2_{2,1}\,dt\Big)
  \leq c,\label{eq.E-Z_0}\\
&E^\prime\Big(\sup_{t\in[0,T]}\|J^\prime(t)\|^2_2\Big)
  + E^\prime\Big(\int_0^T\|J^\prime(t)\|^2_{2,1}\,dt\Big)
 \leq c; \label{eq.E-J_0}
\end{align}

\item[(b)] In the following we establish that
\begin{align}\label{eq.int-J_k-conv}
\lim_{k\rightarrow\infty}E^\prime\Big(\int_0^T\|J^\prime_k(t)-J^\prime(t)\|^2_2\, dt\Big)^{1/2}=0.
\end{align}
For this purpose, let $\delta>0$ be arbitrary and
define for each $k\in\N$ and $\omega^\prime\in\Omega^\prime$ the
set
$$
\Lambda_{k,\delta}(\omega^\prime):=\big\{s\in[0,T]:\ \|J^\prime_k(s,\omega^\prime)-J^\prime(s,\omega^\prime)\|_{2}\geq \delta\big\}.
$$
Since \eqref{eq.P-conv} implies
$\int_0^T\|J^\prime_k(t)-J^\prime(t)\|^{2}_2\,dt \to 0$ $P^\prime$-a.s. as 
$k\to\infty$ we obtain
\begin{align}\label{eq P24 star}
\text{Leb}\big(\Lambda_{k,\delta}(\cdot)\big)
\leq
\frac{1}{\delta^{2}} \int_0^T\|J^\prime_k(t)-J^\prime(t)\|^{2}_2 \, dt\to 0\quad   P^\prime\text{-a.s. as }k\rightarrow \infty.
\end{align}
By applying Fubini's theorem and Cauchy inequality, we derive
\begin{align*}
& E^\prime\left(\int_0^T\|J^\prime_k(t)-J^\prime(t)\|^{2}_2\, dt\right)^{1/2}\\
&\qquad\qquad  = \int_{\Omega^\prime} \Bigg(
     \int_{[0,T]\cap (\Lambda_{k,\delta}(\omega^\prime))^c}\|J^\prime_k(t,\omega^\prime)-  J^\prime(t,\omega^\prime)\|^2_2\,dt\\
&\qquad\qquad  \qquad\qquad     + \int_{[0,T]\cap \Lambda_{k,\delta}(\omega^\prime)}\|J^\prime_k(t,\omega^\prime)-J^\prime(t,\omega^\prime)\|^2_2\, dt
            \Bigg)^{1/2}\,P^\prime(d\omega^\prime)\\
&\qquad\qquad  \leq \delta T^{1/2}
     + E^\prime \left(\sup_{t\in[0,T]}\left(\|J^\prime_k(t)\|_2+\|J^\prime(t)\|_2\right)\, \left(\text{Leb}\big(\Lambda_{k,\delta}(\cdot)\big)\right)^{1/2}\right)\\
&\qquad\qquad  \leq
    \delta T^{1/2}
    +
    \left(E^\prime\left(\sup_{t\in[0,T]}\left(\|J^\prime_k(t)\|^2_2+\|J^\prime(t)\|^2_2\right)\right)\right)^{1/2}
    \Big(E^\prime\left(\text{Leb}\big(\Lambda_{k,\delta}(\cdot)\big) \right)\Big)^{1/2}.
\end{align*}
Since $\delta>0$ is arbitrary, we complete the proof of \eqref{eq.int-J_k-conv} by applying  Lebesgue's dominated convergence theorem together with
\eqref{eq P24 star} and using \eqref{eq.E-J_k} and \eqref{eq.E-J_0}.
 \item[(c)] Define for each $k\in\N$ the stochastic processes
 \begin{align*}
 S^\prime_k:= Z^\prime_k+ J^\prime_k,
\qquad S^\prime:= Z^\prime+ J^\prime,\qquad
 V^\prime_k:= Y^\prime_k+ Z^\prime_k+ J^\prime_k.
 \end{align*}
Note, that it follows from \eqref{eq.equal-in-D} and $V_{\epsilon}
=Y_\epsilon+Z_\epsilon+J_\epsilon$ that $ V^\prime_k$ has the same distribution as $V_{\epsilon_k}$.
The convergence \eqref{eq.P-conv} implies
\begin{align}
S^\prime_k\rightarrow S^\prime \qquad \text{ $ P^\prime$-a.s. in } L^2([0,T],{L^2}),\label{eq.conv-tilde-R}\\
 V^\prime_k\rightarrow S^\prime\qquad \text{ $ P^\prime$-a.s. in } L^2([0,T],{L^2}).\label{eq.conv-tilde-V}
\end{align}
Moreover, since $ Y^\prime_k\to 0$ $ P^\prime$-a.s. in the Skorkhod space $D([0,T],{L^2})$ according to \eqref{eq.P-conv}, we have
 $\sup_{t\in[0,T]}\| Y^\prime_k(t)\|_2\rightarrow 0$ $ P^\prime$-a.s.  Consequently,we obtain
\begin{align}
&\lim_{k\to\infty}\sup_{t\in[0,T]}\|S^\prime_k(t)-S^\prime(t)\|^2_{H^{-d}}= 0\quad \text{ $ P^\prime$-a.s.,}\label{eq.sup-conv-tilde-R}\\
&\lim_{k\to\infty} \sup_{t\in[0,T]}\| V^\prime_k(t)-S^\prime(t)\|^2_{H^{-d}}=0\ \quad \text{ $ P^\prime$-a.s.}\label{eq.sup-conv-tilde-V}
\end{align}
\item[(d)]
It follows from  Lemma \ref{lemma3.1} that there exists a stochastic process
$ V^\prime$ on  $\Omega^\prime$ such that
\begin{align*}
& V^\prime_k\rightarrow  V^\prime \qquad \text{weakly star in $L^2\Big(\Omega^\prime, L^\infty([0,T],{L^2})\Big)$,}\\
& V^\prime_k\rightarrow  V^\prime\qquad \text{weakly in $L^2\Big(\Omega^\prime, L^2([0,T],{H^1_0 })\Big)$,}\\
& V^\prime_k\rightarrow  V^\prime\qquad\text{weakly in $L^2\Big(\Omega^\prime, L^q([0,T],L^q )\Big)$,}\\
& V^\prime=S^\prime \qquad\text{$P^\prime\otimes $Leb-a.s.}
\end{align*}
In particular, it follows by \eqref{eq.E-Z_0}, \eqref{eq.E-J_0}
and Lemma \ref{lemma3.1} that
\begin{align}\label{eq ZJ estamate}
  E^\prime\left(\sup_{t\in[0,T]}\|S^\prime(t)\|_2^2\right)+ E^\prime\left(\int_0^T\|S^\prime(t)\|_{2,1 }^2 \, dt\right)+ E^\prime\left(\int_0^T\|S^\prime(t)\|_q^q \, dt\right)
 \leq C.
\end{align}
\end{itemize}

{\bf Step 2:} We will prove that $S^\prime$ solves equation \eqref{eq u q} for $f= \phi^\prime$ and $g= \psi^\prime$, that is
\begin{align}\label{eq P25 star}
\scapro{w}{S^\prime(t)}& = \scapro{w}{u_0}\notag \\
&\quad +\int_{0}^t \scaprobb{w}{\Delta S^\prime(s) + {\rm div}\, B\big(s,S^\prime(s)\big)
  +  F\big(s,S^\prime(s)\big)+\int_{-\infty}^0\gamma(r) \Delta S^\prime(s+r)\,dr }\, ds\notag\\
&\quad +\int_0^t\scaprobb{w}{G_1\big(s, S^\prime(s)\big)\varphi^\prime(s)}\,ds\\
&\quad
+\int_0^t\int_{X}\scaprobb{w}{G_2\big(s,S^\prime(s),z\big)\big( \psi^\prime(s,z)-1\big)}\,\nu(dz)\,ds, \notag\\
S^\prime(s)&=\varrho(s)\text{ for }s<0, \notag
\end{align}
for each  $w\in H_0^d$. For this purpose note that the very definition of
$S^\prime_k$ yields
\begin{align}\label{eq.aux1}
 & \scapro{w}{S^\prime_k(t)}-
\langle w, u_0\rangle \notag\\
&\qquad -
\int_0^t \scaprobb{w}{\Delta S^\prime_k(s)+ {\rm div}\, B(s,  V^\prime_k(s))
+F(s,  V^\prime_k(s))- \int_{-\infty}^0 \gamma(r)\Delta V^\prime_k(s+r)\, dr} \, ds \notag\\
 &\qquad  -     \int_0^t\scaprob{w}{ G_1(s,  V^\prime_k(s)) \phi^\prime_k(s)} \,ds\notag\\
&  =     \int_0^t\int_{X}\scaprob{w}{G_2(s, V^\prime_k(s), z)( \psi^\prime_k(s,z)-1)}
  \,\nu(dz)\, ds.
\end{align}
It follows from \eqref{eq.P-conv},  \eqref{eq.E-Z_k}, \eqref{eq.E-J_k}, \eqref{eq.conv-tilde-R}--\eqref{eq.sup-conv-tilde-V} by the same method as applied in  \cite[p.5234]{LXZ}, that there is a subsequence
such that
the left hand side of \eqref{eq.aux1} converges $P^\prime$-a.s. to
\begin{align*}
 & \scapro{w}{S^\prime(t)}-
\langle w, u_0\rangle \notag\\
&\qquad +
\int_0^t \scaprob{w}{\Delta S^\prime(s)+ {\rm div}\, B(s, S^\prime(s))
+F(s, S^\prime(s))+ \int_{-\infty}^0 \gamma(r)\Delta S^\prime(s+r)\, dr} \, ds \notag\\
 &\qquad  +      \int_0^t\langle w, G_1(s, S^\prime(s))\phi^\prime(s)\rangle \, ds.
\end{align*}
For taking the limit on the right hand side in \eqref{eq.aux1}, we define for each $\delta>0$ and $\omega^\prime\in\Omega^\prime$ the set $\Lambda_{k,\delta}(\omega^\prime):=\big\{s\in[0,T],
\| V^\prime_k(s,\omega^\prime)-S^\prime(s,\omega^\prime)\|_2\geq \delta\big\}$ and
conclude  from \eqref{eq.conv-tilde-V} as in \eqref{eq P24 star} that
$\text{Leb}(\Lambda_{k,\delta}(\cdot))\rightarrow 0$ $P^\prime$-a.s.
Consequently, we obtain
\begin{align}\label{eq 2}
& E^\prime\Bigg(\Bigg|
  \int_0^t\int_{X}\scaprob{w}{G_2\big(s, V^\prime_k(s),x\big)( \psi^\prime_k(s,x)-1)} \,\nu(dx)\,ds\nonumber \\
&\quad\qquad\qquad  -
  \int_0^t\int_{X}\scaprob{w}{G_2\big(s, S^\prime(s),x\big)( \psi^\prime_k(s,x)-1)}\,\nu(dx)\,ds
\Bigg|\Bigg)\nonumber\\
&\quad \leq
  \|w\|_2 E^\prime\Bigg(\int_0^t\int_{X}\| V^\prime_k(s)-S^\prime(s)\|_2 h_5(s,x)| \psi^\prime_k(s,x)-1|\,\nu(dx)\,ds\Bigg)\nonumber\\
\begin{split}
&\quad\le
\|w\|_2\Bigg(\delta\,\sup_{g\in S^m_2}\int_0^T\int_{X} h_5(s,x)|g(s,x)-1|\,\nu(dx)\,ds \\
&\qquad\qquad\qquad  +\Big( E^\prime\big( R_{\delta,k}^2 \big)\Big)^{1/2}\Big( E^\prime \Big(\sup_{s\in[0,T]}\Big(\| V^\prime_k(s)\|_2+\|S^\prime(s)\|_2\Big) \Big)^2\Big)^{1/2}\Bigg),
\end{split}
\end{align}
where
\begin{align*}
R_{\delta,k}:=\sup_{g\in S^m}\int_{[0,T]\cap \Lambda_{k,\delta}}\int_{X} h_5(s,x)|g(s,x)-1|\,\nu(dx)\,ds.
\end{align*}
Since Leb$(\Lambda_{k,\delta}(\cdot))\rightarrow 0$ $P^\prime$-a.s.\ as $k\to\infty$, we conclude from
 \cite[Le.3.4]{Budhiraja-Chen-Dupuis} or \cite[Eq.(3.19)]{YZZ} that
 $R_{\delta,k} \to 0 $ in $ P^\prime$-probability as $k\to \infty$. As $\delta>0$ is arbitrary, we  obtain from
\eqref{eq 2} together with \cite[Le.3.11]{Budhiraja-Chen-Dupuis}
  that the right hand side in \eqref{eq.aux1}
converges in $ P^\prime$-probability to
\begin{align*}
  \int_0^t\int_{X} \scaprob{w}{G_2(s, S^\prime(s),x)( \psi^\prime(s,x)-1)}\, \nu(dx)\,ds,
\end{align*}
which varifies $S^\prime$ as a solution of \eqref{eq P25 star}.

\noindent
{\bf Step 3.}
In the last step we establish that $L_k:= V^\prime_k-S^\prime$ obeys
 \begin{align}\label{eq condition b}
 \sup_{t\in[0,T]}\|L_k(t)\|^2_2+\int_0^T\|L_k(t)\|^2_{2,1}\,dt\rightarrow 0\ \text{ in $ P^\prime$-probability for }\ k\rightarrow \infty.
\end{align}
Recall that $ V^\prime_k$ has the same distribution as $V_{\epsilon_k}$ and $S^\prime$ is a solution of \eqref{eq u q} according to Step 2. Thus, once we will have established \eqref{eq condition b}, it shows that  Condition (C2) is satisfied.

It{\^o}'s formula implies for each $t\in [0,T]$ that
\begin{align}\label{eq sum}
  \|L_k(t)\|^2_2=-2\int_0^t\|L_k(s)\|^2_{2,1}\,ds+ \sum_{i=1}^9 I_{k,i}(t),
\end{align}
where
\begin{align}
  \label{eq Ik1}
& I_{k,1}(t)=2\int_0^t\scaprobb{ L_k(s)}{ \int_{-s}^0\gamma(r)\Delta L_k(s+r)\, dr}\, ds,\\
   \label{eq Ik2}
& I_{k,2}(t)=2 \int_0^t \Big\langle L_k(s), {\rm div}\big(B(s,  V^\prime_k(s))-B(s,S^\prime(s))\big)\Big\rangle \, ds,\\
    \label{eq Ik3}
& I_{k,3}(t)=2 \int_0^t \scaprob{ L_k(s)}{ F(s, V^\prime_k(s))-F(s, S^\prime(s))} \, ds,\\
   \label{eq Ik4}
&  I_{k,4}(t)= 2 \int_0^t \scaprob{ L_k(s)}{G_1(s,  V^\prime_k(s))\varphi_k^\prime(s)-G_1(s, S^\prime(s))\phi^\prime(s)}\, ds,\\
  \label{eq Ik5}
  \begin{split}
& I_{k,5}(t)= 2 \int_0^t\int_{X}\scaprob{ L_k(s)}{ G_2(s, V^\prime_k(s),x)( \psi^\prime_k(s,x)-1)-G_2(s,S^\prime(s),x)\\
&\qquad\qquad\qquad  ( \psi^\prime(s,x)-1)}\, \nu(dx)\,ds,
 \end{split}\\
  \label{eq Ik6}
& I_{k,6}(t)=2\sqrt{\epsilon_k}\int_{0}^t \scaprob{L_k(s)}{G_1(s, V^\prime_k(s))}\, dW^\prime_k(s),\\
   \label{eq Ik7}
& I_{k,7}(t)=\epsilon_k\int_0^t\|G_1(s, V^\prime_k(s))\|^2_2\,
 ds,\\
   \label{eq Ik8}
& I_{k,8}(t)= 2\epsilon_k \int_0^t\int_{X}\scapro{L_k(s)}{ G_2(s,  V^\prime_k(s-),x)}\,  \widetilde{N}_k^{\prime^{\epsilon^{-1}\psi_k^\prime}}(dx,ds)\\
  \label{eq Ik9}
&  I_{k,9}(t)= \epsilon_k^2 \int_0^t\int_{X}\|G_2(s,  V^\prime_k(s),x)\|^2_2\, N_k^{\prime^{\epsilon^{-1}\psi^\prime_k}}(dx,ds).
\end{align}
By exploiting similar arguments as in \cite[p.5235]{LXZ}, we derive that
$ P^\prime$-a.s. we have
\begin{align}\label{eq Ik1 0}
 & I_{k,1}(t)\leq \left(1+\left(\int_{-t}^0|\gamma(r)|\, dr\right)^2\right)\int_0^t\|L_k(s)\|^2_{2,1}\, ds,\\
\label{eq Ik2 0}
& I_{k,2}(t)\leq \tfrac{1}{2}\int_{0}^t\|L_k(s)\|^2_{2,1}\, ds+c \int_0^t\|L_k(s)\|^2_2\, ds,\\
\label{eq Ik3 0}
& I_{k,3}(t)\leq c\int_0^t\|L_k(s)\|^2_2\,ds,\\
\label{eq Ik7 0}
& I_{k,7}(t)\leq c\epsilon_k \int_0^t\left(\| V^\prime_k(s)\|^2_2+1\right)\,ds
\end{align}
where $c>0$ denotes a generic constant.
Our Assumption (H3) yields
\begin{align}\label{eq Ik4 0}
&I_{k,4}(t)\nonumber\\\
& \leq
     2 \int_0^t\Big|\Big\langle L_k(s),\left(G_1(s, V^\prime_k(s))-G_1(s,S^\prime(s))\right)\phi^\prime_k(s)\Big\rangle\Big|\, ds\nonumber\\
&\qquad \qquad +
     2\int_0^t\Big|\Big\langle L_k(s), G_1(s,S^\prime(s))\left(\phi^\prime_k(s)-\phi^\prime(s)\right)\Big\rangle\Big|\, ds\nonumber\\
&\leq
     c\int_0^t\|L_k(s)\|^2_2\|\phi^\prime_k(s)\|_{H}\, ds
      +
     \left(1+\sup_{s\in[0,t]}\|S^\prime(s)\|_2\right)\int_0^t\|L_k(s)\|_2\left(\|\phi^\prime_k(s)\|_{H}+\|\phi^\prime(s)\|_{H}\right)\,ds\nonumber\\
&\leq
c\int_0^t\|L_k(s)\|_2^2\|\phi^\prime_k(s)\|_{H}\, ds + c\,m\left(1+\sup_{s\in[0,t]}\|S^\prime(s)\|_2\right) \left(\int_0^t\|L_k(s)\|_2^2\,ds\right)^{1/2}.
\end{align}
By applying our assumption (H4) we obtain $ P^\prime$-a.s.\ that
\begin{align}\label{eq Ik5 0}
&|I_{k,5}(t)|\nonumber\\
&\leq
  2\int_0^t\int_{X}\|L_k(s)\|_2\|G_2(s, V^\prime_k(s),x)-G_2(s,S^\prime(s),x)\|_2| \psi^\prime_k(s,x)-1|\,\nu(dx)\, ds\nonumber\\
  &\qquad +
  2\int_0^t\int_{X}\|L_k(s)\|_2 \|G_2(s,S^\prime(s),x)\|_2\left(| \psi^\prime_k(s,x)-1|+| \psi^\prime(s,x)-1|\right)\,\nu(dx)\, ds \nonumber\\
& \leq
  2\int_0^t\int_{X}\|L_k(s)\|^2_2h_5(s,x)| \psi^\prime_k(s,x)-1|\,\nu(dx)\, ds+2\Big(1+\sup_{s\in[0,t]}\|S^\prime(s)\|_2\Big)K(t), 
\end{align}
where we use the notation
\begin{align*}
K(t):= \int_0^t\int_{X}\|L_k(s)\|_2h_6(s,x)\left(| \psi^\prime_k(s,x)-1|+| \psi^\prime(s,x)-1|\right)\,\nu(dx)\, ds .
\end{align*}
By applying the inequalities
(\ref{eq Ik1 0})--(\ref{eq Ik5 0}) to the equality \eqref{eq sum} we obtain $ P^\prime$-a.s.\  for each $t\in [0,T]$ that
\begin{align}\label{eq.gronwall-applied}
&  \|L_k(t)\|^2_2 + \left(\frac{1}{2} -\left(\int_{-t}^0|\gamma(r)|\,dr\right)^{2}\right)\int_0^t\|L_k(s)\|_{2,1}^2\, ds\notag\\
& \qquad \leq
     c\int_0^t\|L_k(s)\|^2_2 \left(2+\|\phi^\prime_k(s)\|_{H}+2\int_{X}h_5(s,x)| \psi^\prime_k(s,x)-1|\, \nu(dx)\right)\, ds\notag\\
&\qquad \qquad    +
       cm\left(1+\sup_{s\in[0,t]}\|S^\prime(s)\|_2\right)\left(\int_0^t\|L_k(s)\|^2_2\, ds\right)^{1/2}
       +
       c\epsilon_k\int_0^t \left(\| V^\prime_k(s)\|^2_2+1\right)\, ds
       \notag\\
&\qquad \qquad    +
     2\Big(1+\sup_{s\in[0,t]}\|S^\prime(s)\|_2\Big)K(t)+
     |I_{k,6}(t)|+|I_{k,8}(t)|+|I_{k,9}(t)|.
\end{align}
From \eqref{eq.conv-tilde-V} and (\ref{eq ZJ estamate})  we conclude
 that
\begin{align}\label{eq Jk1}
  J_{k,1}(t):=  cm\left(1+\sup_{s\in[0,t]}\|S^\prime(s)\|_2\right)\left(\int_0^t\|L_k(s)\|^2_2\, ds\right)^{1/2} \rightarrow 0 \quad  P^\prime\text{-a.s. }
\end{align}
as $k\to\infty$. The estimates  \eqref{eq.E-Y_k} to \eqref{eq.E-J_k}
results in
\begin{align}\label{eq Jk2}
J_{k,2}(t):=  c\epsilon_k\int_0^T\left(\| V^\prime_k(s)\|^2_2+1\right)\, ds
\rightarrow 0\qquad \text{ in }L^1(\Omega^\prime,\R)
\text{ as }k\to\infty.
\end{align}
In order to estimate the term
\begin{align*}
J_{k,3}(t) &:= 2\Big(1+\sup_{s\in[0,T]}\|S^\prime(s)\|_2\Big) K(t),
\end{align*}
we define for fixed $\delta>0$ and $\omega^\prime\in\Omega^\prime$ the set
$\Lambda_{k,\delta}(\omega^\prime):=\big\{s\in[0,T]:\ \|L_k(s,\omega^\prime)\|_2\geq \delta\big\}$.
It follows from \eqref{eq.conv-tilde-V} that
\begin{align*}
   \text{Leb}(\Lambda_{k,\delta}(\cdot))
 \leq
   \frac{1}{\delta^2}\int_0^T\|L_k(s)\|^2_2\,ds
   \rightarrow 0\quad   P^\prime\text{-a.s. as }k\rightarrow\infty.
\end{align*}
By splitting the integration domain we obtain
\begin{align*}
& E^\prime[ K_{k}(t)]\leq
   2\delta \sup_{g\in S_2^m}\int_0^t\int_{X}h_6(s,x)|g(s,x)-1|\,\nu(dx)\,ds+
    2E^\prime\left(\sup_{s\in[0,t]}\|L_k(s)\|_2 R_{\delta,k}(t)\right),
\end{align*}
where we use the notation
\begin{align*}
R_{\delta,k}(t):=\sup_{g\in S_2^m}\int_{[0,t]\cap \Lambda_{k,\delta}}\int_{X}h_6(s,x)|g(s,x)-1|\,\nu(dx)\,ds.
\end{align*}
Since Leb$(\Lambda_{k,\delta}(\cdot))\rightarrow 0$ $P^\prime$-a.s.\ as $k\to\infty$, we conclude from
 \cite[Le.3.4]{Budhiraja-Chen-Dupuis} or \cite[Eq.(3.19)]{YZZ} that
 $R_{\delta,k}(t) \to 0 $ in $ P^\prime$-probability as $k\to \infty$. Thus, we obtain $E^\prime[K_k(t)]\to 0$ which results in 
\begin{align}\label{eq Jk3}
\lim_{k\to\infty}J_{k,3}(t)=0
\quad\text{ in $P^\prime$-probability.}
\end{align}
Choose $T_1>0$ such that $\int_{-T_1}^0|\gamma(r)|\,dr=\tfrac{1}{2}$.
Then by applying  Gronwall's inequality to \eqref{eq.gronwall-applied}
we obtain 
\begin{align}\label{eq star}
  &\sup_{t\in[0,T_1]}\|L_k(t)\|^2_2 + \tfrac{1}{4}\int_0^{T_1}\|L_k(s)\|_{2,1}^2\, ds  \notag  \\
 \begin{split}
  &\qquad\qquad \leq
     \Big(
           J_{k,1}(T_1)+J_{k,2}(T_1)+J_{k,3}(T_1)\\
     &\qquad\qquad  \qquad       +
          \sup_{t\in[0,T_1]}|I_{k,6}(t)|+\sup_{t\in[0,T_1]}|I_{k,8}(t)|+\sup_{t\in[0,T_1]}|I_{k,9}(t)|
     \Big)
     K_{k}(T_1),
\end{split}
\end{align}
where we define
$$
K_{k}(T_1):=\exp\left(c\int_0^{T_1}\left(2+\| \phi^\prime_k(s)\|_{H}+2\int_{X}h_5(s,x)| \psi^\prime_k(s,x)-1|\,\nu(dx)\right)\, ds\right).
$$
By the definition of $\mathcal{S}^m_1$ and \eqref{eq estamite L}   we have $K_k(T_1)\le c$ $ P^\prime$-a.s.\ for all $k\in\N$ for a generic constant $c>0$. By applying Burkholder's inequality to $I_{k,6}$ and $I_{k,8}$ we obtain
\begin{align}\label{eq Ik6 0}
 & E^\prime\Big(\sup_{t\in[0,T_1]}|I_{k,6}(t)|\Big)\nonumber\\
 &\qquad \leq
 2\sqrt{\epsilon_k}  E^\prime\Big(\int_0^{T_1}\|L_k(s)\|^2_2\|G_1(s, V^\prime_k(s))\|^2_2\,ds\Big)^{1/2}\nonumber\\
 &\qquad \leq
 2\sqrt{\epsilon_k}\Big( E^\prime\Big(\sup_{s\in[0,T_1]}\|L_k(s)\|^2_2\Big)+c E^\prime\Big(\int_0^{T_1}(\| V^\prime_k(s)\|^2_2+1)\,ds\Big)\Big),
\end{align}
and
\begin{align}\label{eq Ik8 0}
    & E^\prime\Big(\sup_{t\in[0,T_1]}|I_{k,8}(s)|\Big)\nonumber\\
 &\qquad \leq
    2 E^\prime\Big(\int_0^{T_1}\int_{X} \epsilon_k^2\|L_k(s)\|^2_2\|G_2(s, V^\prime_k(s),x)\|^2_2 \,N^{\prime^{\epsilon_k^{-1} \psi^\prime_k}}(dx,ds)\Big)^{1/2}\nonumber\\
 &\qquad \leq
     2  E^\prime\left(\epsilon_k^{1/4}\sup_{t\in[0,T_1]}\|L_k(t)\|_2
                     \Big(\int_0^{T_1}\int_{X}\epsilon_k^{3/2}\|G_2(s, V^\prime_k(s),x)\|^2_2\, N^{\prime^{\epsilon_k^{-1} \psi^\prime_k}}(dx,ds)\Big)^{1/2}
                \right)\nonumber\\
 &\qquad \leq
     \epsilon_k^{1/2} E^\prime\Big(\sup_{t\in[0,T_1]}\|L_k(t)\|^2_2\Big)
       +
     \epsilon_k^{1/2}c  E^\prime\Big(\int_0^{T_1}\int_{X}(\| V^\prime_k(s)\|^2_2+1)h_6^2(s,x) \psi^\prime_k(s,x)\,\nu(dx)\, ds\Big)\nonumber \\
 &\qquad \leq
     \epsilon_k^{1/2}  E^\prime\Big(\sup_{t\in[0,T_1]}\|L_k(t)\|^2_2\Big)
       +
     \epsilon_k^{1/2}c\, h  E^\prime\Big(\sup_{t\in[0,T_1]}\| V^\prime_k(t)\|^2_2+1\Big),
\end{align}
where we define
\begin{align*}
h:=\sup_{g\in S^m_2}\int_0^{T_1}\int_{X}h_6^2(s,x)g(s,x)\,\nu(dx)\,ds.
\end{align*}
Similarly, it follows from \eqref{eq.sigma_2-growth} that
\begin{align}\label{eq Ik9 0}
 E^\prime(I_{k,9}(T_1))
\leq
\epsilon_k c \left(\sup_{g\in S_2^N}\int_0^{T_1}\int_{X}h_6^2(s,x) g(s,x)\,\nu(dx)\, ds\right)  E^\prime\Big(\sup_{t\in[0,T_1]}\| V^\prime_k(t)\|^2_2+1\Big).
\end{align}
By applying \eqref{eq Jk1} -- \eqref{eq Jk3} and \eqref{eq Ik6 0} --\eqref{eq Ik9 0} to inequality \eqref{eq star} we  conclude 
$$
\sup_{t\in[0,T_1]}\|L_k(t)\|^2_2+\int_0^{T_1}\|L_k(t)\|^2_{2,1}\,dt\rightarrow 0\text{ in $ P^\prime$- probability}.
$$
As the definition of $T_1$ only depends on $\gamma$, we can repeat the above procedure 
for the interval $[T_1,2T_1]$, which after further iterations finally leads to the proof of \eqref{eq condition b}.



\end{document}